\date{\today}
\newtheorem{theorem}{Theorem}[section]
\newtheorem{lemma}{Lemma}[section]
\newtheorem{proposition}{Proposition}[section]
\theoremstyle{definition}
\newtheorem{definition}{Definition}[section]
\theoremstyle{remark}
\newtheorem{remark}{Remark}[section]
\numberwithin{equation}{section}
\numberwithin{equation}{section}
\DeclareMathOperator{\R}{\mathbb{R}}
\DeclareMathOperator{\N}{\mathbb{N}}
\DeclareMathOperator{\one}{\mathbbm{1}} 
\newcommand{\cM}{\mathcal{M}}
\newcommand{\E}[1]{\mathsf{E}_{\mathcal W}\left[#1 \right]}
\newcommand{\var}[1]{\mathsf{Var}_{\mathcal W}\left(#1 \right)}
\newcommand{\cov}[1]{\mathsf{Cov}_{\mathcal W}\left(#1 \right)}
\newcommand{\prob}[1]{\mathcal{W}\left(#1 \right)}
\newcommand{\abs}[1]{\lvert #1 \rvert} 
\newcommand{\norm}[1]{\left\lvert\! \left\lvert#1 \right\rvert\!\right\rvert}
\renewcommand{\O}[1]{\mathrm{O}\left(#1\right)} 
\renewcommand{\o}[1]{\mathrm{o}\left(#1\right)} 
\newcommand{\la}{\left<}
\newcommand{\ra}{\right>}
\DeclareMathOperator{\diam}{\mathrm{diam}}
\DeclareMathOperator{\dime}{\mathrm{dim}}
\newcommand{\ca}[2]{\mathcal{I}\left(h_{\mu^{#1}_{#2}}\right)} 
\newcommand{\f}{\frac}  
\newcommand{\eps}{\epsilon}
\newcommand{\blank}[1]{}
\begin{document}

\title{Thick points for a Gaussian Free Field in $4$ dimensions}
\author[A. Cipriani]{Alessandra Cipriani}
\address{Weierstra{\ss}-Institut, Mohrenstra{\ss}e 39, 10117, Berlin, Germany}
\email{Alessandra.Cipriani@wias-berlin.de}

\author[R. S. Hazra]{Rajat Subhra Hazra}
\address{Institut f\"ur Mathematik\\ Universit\"at Z\"urich\\ Winterthurerstrasse 190\\ 8057-Zurich, Switzerland}
\email{rajat.hazra@math.uzh.ch}

\let\thefootnote\relax\footnote{Keywords: KPZ, Liouville quantum gravity, thick points, Hausdorff dimension, abstract Wiener space, bilaplacian}
\let\thefootnote\relax\footnote{\textit{AMS 2000 subject classifications.} 60G60, 60G15, 60G18}
\begin{abstract}
This article is concerned with the study of fractal properties of  thick points for 4-dimensional Gaussian Free Field.
We adopt the definition of Gaussian Free Field on $\R^4$ introduced by \cite{CJ} viewed as an abstract Wiener space with underlying Hilbert space $H^2(\R^4)$. We can
prove that for $0\leq a \leq 4$, the Hausdorf\/f dimension of the set of $a$-high points is $4-a$. We also show that the thick points give full mass to the Liouville Quantum Gravity measure on $\mathbb R^4$.
\end{abstract}

\maketitle

\section{Introduction} 
Random measures defined by means of log-correlated Gaussian fields $X$ and that can be formally written as ``$ m(\mathrm d\omega)=e^{\gamma X(\omega)}\mathrm d\omega$'' arise in conformal field theory and in probability. When $X$ is an instance of the Gaussian Free Field (GFF) then such measures are referred to as Liouville measures. The interest around such objects comes from physics and in particular from the understanding and proving the KPZ relation, formulated by Knizhnik, Polyakov and Zamolodchikov (\cite{KPZ}), which gives the relation between volume exponents derived using the quantum metric induced by $ m(\mathrm d\omega)$ and the Euclidean metric. Several interesting papers have been written to show this relation, and we refer to \cite{DS10, RaoVar10, RhoVarRev} for details about these results.
To construct such measures one has to rely on an approximation (cut-off) of the field and there are various methods to construct this approximation. 
 While on the one hand a more geometric approach (which explicitly relies on the structure of the field) is present in the work \cite{DS10}, the perspective of \cite{RobVar08, RaoVar10, RhoVarRev} dates back to the definition of \cite{Man72, Kah85} of multiplicative chaos, which deals with properties of the covariance kernel. These works extended the concept of multiplicative chaos of Kahane to a more general class of covariance kernels.

In this paper we focus our attention on the {\em multifractal formalism} of the underpinned Gaussian field, or with an equivalent terminology on its so-called {\em thick points}. To our knowledge the first rigorous study in this direction was made by Mandelbrot (in the collection \cite{BarMan04}) in the context of one-dimensional log-correlated Gaussian fields. In an interesting work,
\cite{HMP} showed that the Hausdorf\/f dimension of the set of $a$-thick points is $2-a$ for $0\le a\le 2$ for the planar GFF, and in \cite{RhoVarRev}, \cite{Kah85} such a result is shown for ``nice'' covariance kernels leading to multiplicative chaos. 
The set of thick points is relevant in the understanding the support of two dimensional Liouville quantum gravity (LQG). 
It was shown in fact in~\cite{DS10} that the LQG measure is almost surely supported on the thick points, in analogy to Kahane's similar results (\cite{Kah85}) on 1D Gaussian multiplicative chaos and to \cite[Theorem 4.1]{RhoVarRev} in higher dimensions. 
Our work being motivated by the definition of sphere average introduced by \cite{CJ} in dimension 4,  we prefer to stick to the more geometrical construction of the Gaussian free field rather than handling it as an instance of multiplicative chaos, although both approaches prove to be fruitful to investigate high points. Other than Chen and Jakobson's recent article and the developement of multiplicative chaos, the main motivation for considering such model comes from its discrete analogue which turns out to be related to the membrane model (cf. \cite{Kurt_d5}) defined on $\mathbb Z^d$. It is known that in dimension $4$ the model undergoes a phase transition in terms of the behavior
of the infinite Gibbs volume measure, as was proved in \cite{Kurt_d4}. Recently, some work on the fractal dimension of the thick points in this discrete setting has been carried through by \cite{Daviaud} for the 2D
discrete Gaussian Free Field and \cite{Cip13} on the discrete 4D membrane model.

 In this article we adhere to the sphere average process of \cite{CJ} and prove in Theorem~\ref{theo:support} that the set of thick points gives full mass to the LQG measure. In particular, we show in  Theorem~\ref{main theorem} that the set of $a$-thick points has Hausdorff dimension $4-a$ when $0\le a\le 4$. 
When $a>4$, the set of thick points is almost surely empty. The outline of the article is as follows. In Section~\ref{sec:model} we recall the model introduced by \cite{CJ} and state our main result more precisely. In Section~\ref{sec:support} we list some basic properties of the sphere average process and also provide a proof of~Theorem~\ref{theo:support} using a so-called \textit{rooted} or \textit{Peyri\`ere measure}.
The proof of Theorem~\ref{main theorem} is given in Sections~\ref{proof-upper} and~\ref{proof-lower} and relies on proving two different bounds. For the upper bound we use the version of the Kolmogorov-Centsov
theorem derived by \cite{HMP}. For the lower bound we use a standard finite-energy method and the Markov property of the GFF.

\section{GFF model and statement of the main results}\label{sec:model}
To keep the paper self contained we review in this section some definitions of the GFF on $\mathbb R^4$ from \cite{CJ} and state some properties of the sphere average process which will
be useful in deriving our main result.
In order to do so we begin with the definition of abstract Wiener space.
\begin{definition}[Abstract Wiener space, \cite{Str10}] \label{awsdef}
An \emph{abstract Wiener space} is a triple $\left(\Theta,H,\mathcal{W}\right)$, where
\begin{itemize}
\item $\Theta$ is a separable Banach space,
\item $H$ is a Hilbert space which is continuously embedded as a dense subspace of $\Theta$, equipped with the scalar product $(\cdot,\,\cdot)_H$,
\item $\mathcal W$ is a  Gaussian probability measure on $\Theta$ defined as follows.
\end{itemize}

Let $\Theta^*$ be the dual space of $\Theta$. Given any $x^*\in \Theta^*$ there exists a unique $h_{x^*}\in H$ such that for all $h\in H$,
$\la h,x^*\ra=(h,h_{x^*})_H$ where $\la \cdot,\,x^*\ra$ denotes the action of $x^*$ on $\Theta$. The sigma algebra $\mathcal B(\Theta)$  on $\Theta$ is
such that all the maps $\theta\mapsto \la\theta, \,x^* \ra$ are measurable.  $\mathcal W$ is a probability measure such that for all $x^*\in \Theta^*$,

\begin{equation}\label{eq:aws}
\mathsf E_{\mathcal W}\left[\exp\left(i\la \cdot, x^*\ra\right)\right]=\exp\left(-\f{\norm{h_{x^*}}^2_H}{2}\right) \, .
\end{equation}\end{definition}
Although the introduction of the set $\Theta$ is evidently important for the definition of the GFF,
its choice is not unique as explained in \cite{Str10}, Corollary 8.3.2 and afterwards. Moreover $\mathcal W(H)=0$ as $H$ is dense in $\Theta$. In our setting, we consider the underlying Hilbert space to be $H:=H^{2}\left(\mathbb{R}^{4}\right)$ which is the completion of the
Schwartz space $\mathcal S\left(\mathbb R^4\right)$ equipped with the inner product
$$( f_{1},f_{2})_{H}=\int_{\mathbb{R}^{4}}\left(I-\Delta\right)^{2}f_{1}\left(x\right)f_{2}\left(x\right)\mathrm d x\;\mbox{ for all }f_{1},f_{2}\in\mathcal{S}\left(\mathbb{R}^{4}\right).
$$
$H^{-2}\left(\mathbb{R}^{4}\right)$ is the
Hilbert space consisting of tempered distributions $\mu$ such that
\[
\left\Vert \mu\right\Vert _{H^{-2}}^{2}=\frac{1}{\left(2\pi\right)^{4}}\int_{\mathbb{R}^{4}}\left(1+\left|\xi\right|^{2}\right)^{-2}\left|\hat{\mu}\left(\xi\right)\right|^{2} \mathrm d \xi<\infty.
\]
where $\hat{\mu}$ is the Fourier transform. It is possible to identify $H$ with $H^{-2}$
through the linear isometry $\left(I-\Delta\right)^{-2}:\, H^{-2}\rightarrow H$. By abuse of notation we will call $h_{\nu}$ the image of $\nu\in H^{-2}$ under
$\left(I-\Delta\right)^{-2}$, that is, $h_{\nu}$ is the unique element
in $H$ such that $\la h,\nu\ra =( h,h_{\nu})_{H}$
for all $h\in H$. At this point we have to introduce another fundamental object for our work, the \emph{Paley-Wiener
integral} $\mathcal{I}\left(h_{\nu}\right)$. $\mathcal I$ is viewed as a mapping
\begin{eqnarray*}
\mathcal I:\,x^*\in \Theta^*&\mapsto& \mathcal I(h_{x^*})\in L^2(\mathcal W)\\
 && \theta\in \Theta \mapsto\mathcal [I(h_{x^*})](\theta):=\la \theta,\,x^*\ra .
\end{eqnarray*}
By \eqref{eq:aws}, we have $\left\{ \mathcal{I}\left(h_{\nu}\right):\nu\in H^{-2}\right\} $
is also a Gaussian family whose covariance is given by
\[
\mathsf E_{\mathcal{W}}\left[\mathcal{I}\left(h_{\nu_{1}}\right)\mathcal{I}\left(h_{\nu_{2}}\right)\right]=\la h_{\nu_{1}},h_{\nu_{2}}\ra_{H}=\la \nu_{1},\nu_{2}\ra_{H^{-2}}.
\]
Therefore $\mathcal I$ is an isometry from $\left\{h_{x^*}\,:\,x^*\in\Theta^*\right\}\to L^2(\mathcal W)$, and since the former set is dense in $H$,
 it admits a unique extension to the whole of $H$.
For every $x\in\mathbb{R}^{4}$ and $\epsilon>0$ denote as
$\sigma_{\epsilon}^{x}\in H^{-2}$ the tempered distribution given by
\[
\la f,\sigma_{\epsilon}^{x}\ra=\frac{1}{2\pi^{2}\epsilon^{3}}\int_{D(x,\eps)}f\left(y\right)\mathrm d \sigma\left(y\right),\,\mbox{ for all }f\in\mathcal{S}\left(\mathbb{R}^{4}\right),
\]
where $\mathrm d \sigma$ is the surface area measure
on $D(x,\eps)$, the sphere of radius $\eps$ around $x$. Interestingly, \cite{CJ} noted that $\left\{ \mathcal{I}\left(h_{\sigma_{\epsilon}^{x}}\right):\epsilon>0\right\} $ fails
to possess the Markov property and considered the following Gaussian family:
$$\left\{ \mathcal{I}\left(h_{\sigma_{\epsilon}^{x}}\right),\mathcal{I}\left(h_{\mathrm d \sigma_{\epsilon}^{x}}\right):x\in\mathbb{R}^{4},\epsilon>0\right\} ,$$ where
$\mathrm d \sigma_{\epsilon}^{x}$ the tempered distribution given by $\left\langle f,\mathrm d \sigma_{\epsilon}^{x}\right\rangle :=\frac{\mathrm d}{\mathrm d\epsilon}\left\langle f,\sigma_{\epsilon}^{x}\right\rangle $
for all $f\in\mathcal{S}\left(\mathbb{R}^{4}\right)$.  It is important to point out at this juncture that such a
collection is reminiscent of the double boundary conditions needed for the membrane model in the discrete case (\cite{Kurt_thesis}).
Let $\zeta:=(1,1)^T$ and $$ \mathbf{B}\left(r\right):=\left(\begin{array}{cc}
I_{1}\left(r\right)/r & I_{1}^{\prime}\left(r\right)\\
I_{2}\left(r\right)/r & I_{1}^{\prime\prime}\left(r\right)
\end{array}\right),$$
where  $I_{k}$ are the modified Bessel functions of order $k\in\mathbb{N}$.
Define
\begin{equation}
\mu_{\epsilon}^{x}:=\zeta^{\top}\mathbf{B}^{-1}\left(\epsilon\right)\left(\begin{array}{c}
\sigma_{\epsilon}^{x}\\
\mathrm d \sigma_{\epsilon}^{x}
\end{array}\right).
\end{equation}
It was shown in \cite{CJ} that $\mu_{\epsilon}^x\in H^{-2}(\mathbb R^4)$ and
$\left\{ \mathcal{I}\left(h{}_{\mu_{\epsilon}^{x}}\right):x\in\mathbb{R}^{4},\epsilon>0\right\} $ forms a Gaussian family with the correct Markovian properties and is the suitable candidate for the sphere average process.

\begin{definition}[Thick points of the sphere average]
For the sphere average process the set of {\it $a$-thick points} is defined as
\begin{equation}
T(a)=\left\{x\in \mathbb R^4:\lim_{\epsilon\to 0}\frac{\mathcal{I}\left(h{}_{\mu_{\epsilon}^{x}}\right)}{\sqrt{2 \pi^2} G(\epsilon)}=\sqrt{2a}\right\}.
\end{equation}
Here $G(\eps)= \var{\ca{x}{\eps}}$ and an explicit expression using Bessel functions is given in~\eqref{eq:green}.
\end{definition}
We would also need a definition of another set quite similar to the above:
\begin{equation}
T_{\geq}(a)=\left\{x\in \mathbb R^4:\limsup_{\epsilon\to 0}\frac{\mathcal{I}\left(h{}_{\mu_{\epsilon}^{x}}\right)}{\sqrt{2 \pi^2} G(\epsilon)}\geq \sqrt{2a}\right\}.
\end{equation}
It is easy to see that
$$T(a)\subset T_{\geq}(a).$$

One of the main results of~\cite{CJ} (Theorem 5) was to show the existence of the Liouville quantum gravity measure and the validity of the KPZ relation in $\mathbb R^4$.
Define a random measure on $\mathbb R^4$ by
$$m^\theta_\eps(\mathrm d x)= E^\theta_\eps(x) \mathrm d x,$$
 where 
$$E^\theta_\eps=\exp\left(\gamma\ca{x}{\eps}-\frac{\gamma^2}{2}G(\eps)\right).$$
If $\eps_n=\eps_0^n$ with $\eps_0\in(0,1)$ and $0<\gamma^2<2\pi^2$, then there exists a non-negative measure $m^\theta$ on $\mathbb R^4$ such that the following convergence holds for every $f\in C_c(\mathbb R^4)$:
\begin{equation}\label{eq:weak}
\int_{\mathbb R^4} f(x) m^\theta_{\eps_n}(\mathrm d x)\to \int_{\mathbb R^4} f(x) m^\theta(\mathrm d x) \text{ as } n \to\infty
\end{equation}
$\mathcal W$-almost surely and also in $L^2(\mathcal W)$. It is also known that this measure is almost surely positive.

 In the following Theorem we show that the set of thick points gives full measure to the LQG measure in $\mathbb R^4$.

\begin{theorem}\label{theo:support}
Let $0<\gamma^2<2\pi^2$, then for $a=\gamma^2/4\pi^2$ we have
$$m^\theta(T(a)^c) =0 \,  \, \, \mathcal W-a.s.$$
That is, the set $T(a)$ gives full mass to the measure $m^\theta(\cdot)$.
\end{theorem}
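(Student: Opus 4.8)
The plan is to run the \emph{rooted} (Peyri\`ere) measure argument, as in the treatment of the planar GFF in \cite{DS10} and of Gaussian multiplicative chaos in \cite{Kah85,RhoVarRev}; the only genuinely new input is that the covariance and Markov structure must be read off from the sphere--average process $\ca{x}{\eps}$ of \cite{CJ}. Fix $R>0$, put $D:=D(0,R)\subset\R^4$, and set $\mathcal F_{\eps_n}:=\sigma\big(\ca{y}{\delta}:y\in\R^4,\ \delta\geq\eps_n\big)$. Since $\g^2<2\pi^2$, the convergence in \eqref{eq:weak} also holds in $L^1(\mathcal W)$, whence $\E{m^\theta(D)}=\Leb(D)<\infty$; and because $n\mapsto E^\theta_{\eps_n}(x)$ is a unit--mean $\mathcal W$--martingale, $\E{m^\theta(\mathrm dx)\mid\mathcal F_{\eps_n}}=m^\theta_{\eps_n}(\mathrm dx)=E^\theta_{\eps_n}(x)\,\mathrm dx$. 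One may therefore define a finite measure $Q$ on $D\times\Theta$ by $\int g\,\mathrm dQ:=\E{\int_D g(x,\cdot)\,m^\theta(\mathrm dx)}$; its first marginal is $\Leb|_D$, and disintegrating along it produces probability measures $(Q^x)_{x\in D}$ on $\Theta$ whose restriction to each $\mathcal F_{\eps_n}$ has $\mathcal W$--density $E^\theta_{\eps_n}(x)$. It then suffices to show that for $\Leb$--a.e.\ $x\in D$,
\begin{equation}\label{eq:peyriere-goal}
Q^x\Big(\lim_{\eps\to0}\frac{\ca{x}{\eps}}{\sqrt{2\pi^2}\,G(\eps)}=\sqrt{2a}\Big)=1 ,
\end{equation}
since then $\E{m^\theta(D\cap T(a)^c)}=\int_D Q^x\big(x\notin T(a)\big)\,\mathrm dx=0$, so $m^\theta(D\cap T(a)^c)=0$ $\mathcal W$--a.s., and letting $R\to\infty$ through the integers gives $m^\theta(T(a)^c)=0$ $\mathcal W$--a.s.

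To prove \eqref{eq:peyriere-goal} I would use the two facts, to be recorded among the basic properties of Section~\ref{sec:support}, that encode the Markovian structure built into $\mu^x_\eps$: for fixed $x$ the process $(\ca{x}{\eps})_{\eps>0}$ is centred Gaussian with $\E{\ca{x}{\eps}\,\ca{x}{\delta}}=G(\eps\vee\delta)$ (so it is, after the time change $t=G(\eps)$, a Brownian motion), and it enjoys the domain Markov property of \cite{CJ}, in particular the increments $\ca{x}{\eps_m}-\ca{x}{\eps_n}$ with $m>n$ are independent of $\mathcal F_{\eps_n}$. Hence, along $\eps_n=\eps_0^n$, the variables $\Delta_m:=\ca{x}{\eps_m}-\ca{x}{\eps_{m-1}}$ for $m\geq2$ and $\Delta_1:=\ca{x}{\eps_1}$ are independent centred Gaussians under $\mathcal W$, with $v_m:=\var{\Delta_m}$ lying between two positive constants (by the explicit formula \eqref{eq:green}) and $\sum_{m\leq n}v_m=G(\eps_n)$. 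Since the tilt $E^\theta_{\eps_n}(x)=\exp\big(\g\,\ca{x}{\eps_n}-\tfrac{\g^2}{2}G(\eps_n)\big)=\prod_{m\leq n}\exp\big(\g\Delta_m-\tfrac{\g^2}{2}v_m\big)$ factorises over the independent $\Delta_m$, the Cameron--Martin theorem shows that under $Q^x$ the $\Delta_m$ are again independent Gaussians with variances $v_m$ but means $\g v_m$; consequently, under $Q^x$,
\begin{equation*}
\ca{x}{\eps_n}=\g\,G(\eps_n)+S_n,\qquad S_n:=\sum_{m=1}^n\big(\Delta_m-\mathsf E_{Q^x}\Delta_m\big),\qquad \var{S_n}=G(\eps_n).
\end{equation*}
Because $G(\eps_n)\uparrow\infty$ with increments $v_m$ comparable to a constant, $\sum_m v_m/G(\eps_m)^2<\infty$, so Kolmogorov's strong law gives $S_n/G(\eps_n)\to0$ $Q^x$--a.s.; hence $\ca{x}{\eps_n}/(\sqrt{2\pi^2}\,G(\eps_n))\to\g/\sqrt{2\pi^2}=\sqrt{2a}$ $Q^x$--a.s., using $a=\g^2/4\pi^2$. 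Finally, an $L^2$ modulus--of--continuity bound for $\eps\mapsto\ca{x}{\eps}$ on the blocks $[\eps_{n+1},\eps_n]$ (again from Section~\ref{sec:support}) together with Borel--Cantelli upgrades this to the full limit as $\eps\to0$, which is \eqref{eq:peyriere-goal}.

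The parts requiring genuine care are all in the set--up of the rooted measure rather than in the probabilistic computations: one must check that the densities $E^\theta_{\eps_n}(x)$ form a consistent family defining $Q^x$ on $\sigma\big(\bigcup_n\mathcal F_{\eps_n}\big)$ --- which is where $\g^2<2\pi^2$, via uniform integrability of the martingale $m^\theta_{\eps_n}(D)$, is really used --- that the identity $\E{m^\theta(\mathrm dx)\mid\mathcal F_{\eps_n}}=m^\theta_{\eps_n}(\mathrm dx)$ genuinely yields the stated disintegration (a Lebesgue differentiation argument in $x$), and that $\{x\in T(a)\}$ is jointly Borel in $(x,\theta)$ so that the Fubini step above is legitimate. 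Granting these, the Cameron--Martin identity, the strong law, and the interpolation between scales are routine; I expect that the real content on which the argument leans is the extraction, from \cite{CJ}, of the exact covariance $G(\eps\vee\delta)$ and of the independence of the increments of the sphere average --- precisely the Markovian features for which the combination $\zeta^{\top}\mathbf B^{-1}(\eps)$ was introduced.
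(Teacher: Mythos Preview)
Your proposal is correct and follows the same rooted (Peyri\`ere) measure strategy as the paper: build the probability measure $m^\theta(\mathrm dx)\,\mathcal W(\mathrm d\theta)$ on $\Gamma\times\Theta$, disintegrate over~$x$, and use a Cameron--Martin shift to see that under the conditional law the sphere average acquires drift~$\gamma$, so that the thick--point condition holds almost surely.

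The execution differs in two places. First, the paper works directly in the continuous time variable $t=G(\eps)$ and shows (Lemma~\ref{lem:transformation}) that under the disintegrated measure $\mathcal L_x$ the process $\widetilde B(x,t)$ has the law of a standard Brownian motion with drift~$\gamma$; then $\widetilde B(x,t)/t\to\gamma$ is immediate for the full continuous limit, with no interpolation step needed. You instead work along the geometric sequence $\eps_n=\eps_0^n$, apply Kolmogorov's SLLN to the discrete increments, and then need a separate modulus--of--continuity/Borel--Cantelli argument to pass from $\eps_n$ to general~$\eps$. Both are fine; the paper's route is shorter. Second, a caution: your filtration $\mathcal F_{\eps_n}=\sigma\big(\ca{y}{\delta}:y\in\R^4,\,\delta\ge\eps_n\big)$ is too large for the claim that $\ca{x}{\eps_m}-\ca{x}{\eps_n}$ is independent of it (spheres $D(y,\delta)$ with $\delta\ge\eps_n$ can overlap $D(x,\eps_n)$). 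What you actually use, and what Lemma~\ref{lem:time inversion} gives, is independence from the \emph{one--point} filtration $\sigma\big(\ca{x}{\delta}:\delta\ge\eps_n\big)$; this suffices for the martingale/Cameron--Martin computation and for identifying the law of the $\Delta_m$ under $Q^x$, so simply replace $\mathcal F_{\eps_n}$ by this smaller $\sigma$--field. The paper sidesteps this by never invoking a global domain Markov property: it applies Cameron--Martin at fixed~$t$ using only the covariance identity $\cov{\widetilde B(x,t),\,\ca{x}{\eps}}=t$ for small~$\eps$.
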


For the proof of Theorem~\ref{theo:support} we construct the \textit{rooted measure or Peyri\`ere measure}. For the use of rooted measures see~\cite{DS10, RhoVarRev}.

Before we state our main result on fractal properties of thick points,  we recall the definition of Hausdorff dimension and Hausdorff measure.\begin{definition}[Hausdorf\/f dimension]
 Let $X$ be a metric space and $S \subseteq X$. For every $d\geq0$ and $\delta>0$ define the Hausdorf\/f-$d$-measure in the following way:
$$
C^d_\delta(S)
:=\inf\Bigl\{\sum_i \diam(E_i)^d \,:\, E_1,\, E_2 ,\, E_3 ,\,\ldots,\text{ cover}\, S, \,\diam(E_i) \leq \delta \Bigr\},
$$
i.e. we are considering coverings of $S$ by sets of diameter no more than $\delta$.
Then
$$
C_{\mathcal H}^d (S) = \sup_{\delta>0} C^d_\delta(S) = \lim_{\delta\downarrow 0} C^d_\delta(S)
$$
is the Hausdorf\/f-$d$-measure of the set $S$.
The \emph{Hausdorff dimension} of $S$ is defined by
$$
  \dim_{\mathcal{H}}(S):=\inf\{d\ge 0:\, C_{\mathcal{H}}^d(S)=0\}.
$$
\end{definition}
\begin{theorem}\label{main theorem}
For $0\le a\le 4$, the Hausdorf\/f dimension of $T(a)$ is $4-a$. For $a>4$, we have that $T(a)$ is empty.
\end{theorem}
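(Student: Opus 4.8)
The plan is to prove the two matching bounds $\dim_{\mathcal{H}}T(a)\le 4-a$ and $\dim_{\mathcal{H}}T(a)\ge 4-a$ for $0<a<4$; the remaining cases are soft. At $a=0$, a strong law for the increments of $k\mapsto\mathcal I(h_{\mu^x_{\e^{-k}}})$ shows that Lebesgue-almost every $x$ lies in $T(0)$, so $\dim_{\mathcal{H}}T(0)=4$; at $a=4$ the upper bound already forces $\dim_{\mathcal{H}}T(4)=0$; and when $a>4$ the first-moment estimate used for the upper bound becomes summable in the scale, so that Borel--Cantelli makes $T(a)\subseteq T_{\ge}(a)$ empty almost surely. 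All of the arguments rest on two facts to be recorded in Section~\ref{sec:support}: the variance asymptotics $G(\eps)=\frac{1}{2\pi^{2}}\log(1/\eps)+\O{1}$ together with the covariance estimate $\mathsf E_{\mathcal W}\!\bigl[\mathcal I(h_{\mu^x_\eps})\mathcal I(h_{\mu^y_\delta})\bigr]=\frac{1}{2\pi^{2}}\log\frac{1}{\abs{x-y}\vee\eps\vee\delta}+\O{1}$, both read off from~\eqref{eq:green}, and the Markovian structure of the family $\{\mathcal I(h_{\mu^x_\eps})\}$ established in~\cite{CJ}, which makes $k\mapsto\mathcal I(h_{\mu^x_{\e^{-k}}})$ behave, for each fixed $x$, like a Gaussian random walk with independent increments of variance $\sim\frac{1}{2\pi^{2}}$.

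\textbf{Upper bound.} I would first invoke the version of the Kolmogorov--Centsov theorem proved in~\cite{HMP} to obtain, on every compact set, a modulus of continuity for $(\eps,x)\mapsto\mathcal I(h_{\mu^x_\eps})$ in the hyperbolic metric $(\abs{x-x'}+\abs{\eps-\eps'})/(\eps\wedge\eps')$; this lets one replace, up to an error that is $\o{G(\eps)}$, the value $\mathcal I(h_{\mu^x_\eps})$ at a point $x$ and a scale $\eps\in[\e^{-(n+1)},\e^{-n}]$ by $\mathcal I(h_{\mu^{x_Q}_{\e^{-n}}})$, where $x_Q$ is the centre of the cube $Q$ of side $\e^{-n}$, from the corresponding mesh, that contains $x$. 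Working on $[0,1]^{4}$ (and covering $\R^{4}$ by countably many translates), fix $a'<a$; the definition of $T_{\ge}(a)$ yields the covering $T_{\ge}(a)\subseteq\bigcap_{N}\bigcup_{n\ge N}\bigcup_{Q\in\mathcal D_{n}}Q$, where $\mathcal D_{n}$ collects the cubes of side $\e^{-n}$ whose centre satisfies $\mathcal I(h_{\mu^{x_Q}_{\e^{-n}}})\ge\sqrt{2a'}\,\sqrt{2\pi^{2}}\,G(\e^{-n})$. A Gaussian tail bound gives $\mathsf E_{\mathcal W}[\#\mathcal D_{n}]\le \e^{4n}\exp\!\bigl(-2\pi^{2}a'\,G(\e^{-n})\bigr)=\e^{(4-a'+\o{1})n}$, hence $\mathsf E_{\mathcal W}\!\bigl[\sum_{Q\in\mathcal D_{n}}\diam(Q)^{d}\bigr]\to0$ for every $d>4-a'$; the standard first-moment estimate for the Hausdorff content together with Fatou's lemma then yields $C_{\mathcal{H}}^{d}(T_{\ge}(a))=0$ almost surely, so $\dim_{\mathcal{H}}T(a)\le\dim_{\mathcal{H}}T_{\ge}(a)\le 4-a'$; letting $a'\uparrow a$ finishes the bound.

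\textbf{Lower bound} ($0<a<4$). Here I would run the finite-energy method. Put $\g_a:=2\pi\sqrt a$, so that a point is $a$-thick precisely when $\mathcal I(h_{\mu^x_{\e^{-k}}})$ tracks the line $k\mapsto\g_a G(\e^{-k})$ (for $a<1/2$ one could take $\g=\g_a$ in Theorem~\ref{theo:support}, but the truncation introduced now is what reaches the full range $a<4$). Consider the truncated, tilted measures
\[
\mu_n(\mathrm dx):=\exp\!\Bigl(\g_a\,\mathcal I(h_{\mu^x_{\e^{-n}}})-\tfrac{\g_a^{2}}{2}G(\e^{-n})\Bigr)\,\one_{E_n(x)}\,\mathrm dx,\qquad
E_n(x):=\bigcap_{k\le n}\Bigl\{\bigl|\mathcal I(h_{\mu^x_{\e^{-k}}})-\g_a G(\e^{-k})\bigr|\le k^{3/4}\Bigr\}.
\]
A Cameron--Martin shift turns $\mathsf E_{\mathcal W}[\mu_n(A)]$ into $\Leb(A)$ times the probability that a random walk with bounded-variance increments stays in the tube $\{\abs{S_k}\le k^{3/4}:k\le n\}$, which is bounded below uniformly in $n$ because $k^{3/4}\gg\sqrt k$; hence $\mathsf E_{\mathcal W}[\mu_n(A)]\ge c\,\Leb(A)>0$. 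For the second moment write $r=\abs{x-y}$ and $m\approx\log(1/r)$: the same shift produces the factor $\exp\!\bigl(\g_a^{2}\,\mathsf E_{\mathcal W}[\mathcal I(h_{\mu^x_{\e^{-n}}})\mathcal I(h_{\mu^y_{\e^{-n}}})]\bigr)\asymp r^{-2a}$, while the Markov property shows that, under the doubled tilt, the common part of the two fields up to scale $m$ carries \emph{twice} the target drift, so that $E_n(x)$ forces it to perform a large deviation of size $\asymp m$ by scale $m$, contributing an extra factor $\asymp r^{a}$. Altogether
\[
\mathsf E_{\mathcal W}\!\Bigl[\iint\abs{x-y}^{-s}\,\mu_n(\mathrm dx)\,\mu_n(\mathrm dy)\Bigr]\lesssim\iint_{A\times A}\abs{x-y}^{-s-a}\,\mathrm dx\,\mathrm dy<\infty\qquad\text{whenever }s<4-a.
\]
Consequently an $L^{2}(\mathcal W)$-bounded subsequence of $(\mu_n)$ converges weakly to a random measure $\mu_\infty$ with $\mathsf E_{\mathcal W}[\mu_\infty(A)]\ge c>0$ and finite $s$-energy; the tube constraint forces $\mu_\infty(T(a)^{c})=0$ (using the Kolmogorov--Centsov modulus once more to interpolate between the scales $\e^{-k}$ and a general $\eps\to 0$), so Frostman's lemma gives $\dim_{\mathcal{H}}T(a)\ge s$ on an event of positive probability, and a standard zero-one law (cf.~\cite{HMP}) promotes this to an almost-sure statement. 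Letting $s\uparrow 4-a$ and combining with the upper bound proves the theorem.

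\textbf{Main obstacle.} The delicate step is the second-moment estimate for the truncated measures: one must extract from the Bessel-function expression~\eqref{eq:green} the sharp logarithmic asymptotics of $G$ and of the covariance and, above all, the approximate Markov decomposition of $\{\mathcal I(h_{\mu^x_\eps})\}$ at two well-separated points, and then verify that the truncation does generate precisely the gain $r^{a}$ needed to push the energy estimate past the naive ``$L^{2}$'' threshold $a<2$ up to the full range $a<4$; keeping the random-walk comparison accurate near the coalescence scale $m$ is where care is required. The Kolmogorov--Centsov input and the zero-one law are, by contrast, routine adaptations of~\cite{HMP}.
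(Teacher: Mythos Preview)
Your upper bound matches the paper's Section~\ref{proof-upper} essentially line for line: Kolmogorov--Centsov regularity (your hyperbolic modulus is exactly Proposition~\ref{prop:KC}), discretisation of scales and space, first-moment Gaussian tail estimate, and Borel--Cantelli for $a>4$. The only cosmetic difference is that the paper discretises along $r_n=n^{-K}$ rather than $\e^{-n}$.

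For the lower bound you take a genuinely different route. The paper follows the Dembo--Peres--Rosen--Zeitouni / Hu--Miller--Peres template: it works on a super-geometric mesh $s_n=1/n!$, declares a centre $x_{ni}$ to be \emph{$n$-perfect} if the time-changed Brownian motion $t\mapsto\mathcal I(h_{\mu^x_{G^{-1}(t)}})$ stays in a tube of width $\sqrt{t_{m+1}-t_m}$ around the drift on each block $[t_m,t_{m+1}]$, and defines $\mu_n$ as Lebesgue measure on the perfect cubes normalised by $\prob{E^n(x_{ni})}^{-1}$. The second-moment bound is then a pure correlation inequality (Lemma~\ref{lemma:disjoint}): the Markov property in disjoint annuli, together with the super-geometric spacing, produces a multiplicative constant $\mathscr C_l$ depending only on the coalescence scale, and the energy series $\sum_l s_l^{4}\mathscr C_l s_{l+1}^{-\alpha}$ converges by the ratio test. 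No Cameron--Martin shift or exponential tilt appears.

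Your approach is the multiplicative-chaos-with-barrier method: you keep the exponential tilt, shift by Cameron--Martin to centre the walk, and recover the crucial $r^{a}$ gain as a large-deviation cost for the walk (which now carries drift $2\gamma_a$ up to the coalescence scale) to stay inside the single-drift tube. This is correct and gives the same conclusion; it is arguably more transparent about \emph{where} the exponent $4-a$ comes from, and it avoids the somewhat delicate choice of super-geometric scales that the paper needs to make $\sum_l s_l^{4}\mathscr C_l s_{l+1}^{-\alpha}$ converge. Conversely, the paper's route bypasses the Girsanov bookkeeping entirely and reduces everything to elementary Brownian estimates once Lemma~\ref{lem:time inversion} is in hand. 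One point to flag: your appeal to ``a standard zero--one law (cf.~\cite{HMP})'' is a little loose in this setting---the paper's Section~\ref{proof-lower} argues the zero--one law via the abstract Wiener space structure, expanding $\mathcal I(h_{\sigma^x_\eps})$ against an orthonormal basis $(h_m)$ of $H$ and applying Kolmogorov's law to the i.i.d.\ coordinates $\mathcal I(h_m)$; you would need to supply an analogous tail-triviality argument rather than import the 2D one verbatim.
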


\begin{remark}
The above result shows similarity with the membrane model. In \cite{Cip13} it was shown that discrete fractal dimension of the $a$-high points is $4-4a^2$.

\end{remark}

To prove Theorem~\ref{main theorem} we apply some
of the techniques implemented in \cite{DPRZPlanar,DPRZSpatial} to show similar results for occupation measures of planar or spatial Brownian motion.

\section{GFF model and some estimates}\label{sec:support}
This section is devoted to providing some details about the behavior of the sphere average process, such as the covariance structure.
We then use them to derive a proof of Theorem~\ref{theo:support}.

\subsection{Some more properties of the sphere average process: covariance structure}
Let us denote as $D(0,R)$ the sphere centered at $0$ with radius $R>0$. Let $I_r, K_r$  be the modified Bessel functions of order $r\in\mathbb N\cup \{0\}$.
Define the positive function $G:\left(0,\infty\right)\mapsto \left(0,\infty\right)$
by
\begin{eqnarray}
G\left(r\right) & :=\left(-\frac{1}{4\pi^{2}}\right)\frac{2I_{1}\left(r\right)K_{1}\left(r\right)+2I_{2}\left(r\right)K_{0}\left(r\right)-1}{I_{1}^{2}\left(r\right)-I_{0}\left(r\right)I_{2}\left(r\right)}\,\, .
\label{eq:green}
\end{eqnarray}
It can be shown that $G$ is strictly decreasing and smooth, with $\lim_{r\to 0}G(r)=+\infty$ and $\lim_{r\to +\infty}G(r)=0$. 
It also follows from the properties of the Bessel functions that as $r$ decreases to $0$, $G(r)$ asymptotically behaves like $-\frac{1}{2\pi^2} \log r$.
Then, we have that
\begin{enumerate}
\item given $x\in\mathbb{R}^{4}$ and $\epsilon_{1}\geq\epsilon_{2}>0$\emph{,
\begin{equation}
\mathsf E_{\mathcal{W}}\left[\mathcal{I}\left(h_{\mu_{\epsilon_{1}}^{x}}\right)\mathcal{I}\left(h_{\mu_{\epsilon_{2}}^{x}}\right)\right]=
\mathsf E_{\mathcal{W}}\left[\mathcal{I}^{2}\left(h_{\mu_{\epsilon_{1}}^{x}}\right)\right]=G\left(\epsilon_{1}\right).\label{eq:cov concentric}
\end{equation}
}
\item Given $x,y\in\mathbb{R}^{4}$, $x\neq y$, and $\epsilon_{1},\epsilon_{2}>0$
with $\overline{D(x,\,\eps_1)}\cap\overline{D(y,\,\eps_2)}=\emptyset$,\emph{
\begin{equation}
\mathsf E_{\mathcal{W}}\left[\mathcal{I}\left(h_{\mu_{\epsilon_{1}}^{x}}\right)\mathcal{I}\left(h_{\mu_{\epsilon_{2}}^{y}}\right)\right]=\frac{1}{2\pi^{2}}K_{0}\left(\left|x-y\right|\right),\label{eq:cov nonoverlap}
\end{equation}
} where $K_0$ is the modified Bessel function of order $0$.
\item Given $x,y\in\mathbb{R}^{4}$, $x\neq y$, and $\epsilon_{1},\epsilon_{2}>0$ with $D(y,\,\eps_2)\subseteq D(x,\,\epsilon_1)$, \emph{
\begin{equation}
\begin{split}\mathsf E_{\mathcal{W}}\left[\mathcal{I}\left(h_{\mu_{\epsilon_{1}}^{x}}\right)\mathcal{I}\left(h_{\mu_{\epsilon_{2}}^{y}}\right)\right]=I_{0}\left(\left|x-y\right|\right)G\left(\epsilon_{1}\right)-\frac{1}{4\pi^{2}}\frac{I_{2}\left(\left|x-y\right|\right)}{I_{1}^{2}\left(\epsilon_{1}\right)-I_{0}\left(\epsilon_{1}\right)I_{2}\left(\epsilon_{1}\right)}.\end{split}
\end{equation}}
\end{enumerate}

The next lemma states one of the most useful and important properties of the spherical average process and is analogous to the properties of the two dimensional circular average process studied in \cite{DS10, HMP}. It shows that for fixed $x\in \mathbb R^4$,
the spherical average after a time change is a Brownian motion and in disjoint annuli two such motions are independent.
We briefly sketch the proof of the following lemma as it is an easy consequence after one compares the covariance structure.
\begin{lemma}\label{lem:time inversion}
\begin{enumerate}
 \item[(a)] Let $G(\cdot)$ be as in \eqref{eq:green} and for $x\in \mathbb R^4$, let $B(x,t)=\ca{x}{G^{-1}(t)}$. Then
$B(x,t)-B(x,t_1)$ has the same distribution as a standard Brownian motion for $t\ge t_1$.
\item[(b)] Given $x,y\in\mathbb R^4$ and $t_1\le t\le t_2$ and $s_1\le s\le s_2$ be such that $D(x, G^{-1}(s_1))\setminus D(x,G^{-1}(s_2))$ and
$D(y,G^{-1}(t_1))\setminus D(y,G^{-1}(t_2))$ are disjoint, then $\{B(x,s)-B(x,s_1)\}_{s_1\le s\le s_2}$ is independent of $\{B(y,t)-B(y,t_1)\}_{t_1\le t\le t_2}$.
\end{enumerate}
\end{lemma}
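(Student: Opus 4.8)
Here is the strategy. Both parts are going to follow from the Gaussianity of the family $\{\ca{x}{\epsilon}:x\in\R^4,\,\epsilon>0\}$ together with the three covariance identities (1)--(3) recorded above; the argument is then just a covariance computation, and the only genuine work lies in a geometric case analysis for part~(b). Throughout I use that $G\colon(0,\infty)\to(0,\infty)$ is a smooth, strictly decreasing bijection, so $G^{-1}$ is well defined, smooth, and strictly decreasing. For part~(a), the process $t\mapsto B(x,t)=\ca{x}{G^{-1}(t)}$ is centred Gaussian, and for $t\le t'$ one has $G^{-1}(t)\ge G^{-1}(t')$, so applying \eqref{eq:cov concentric} with $\epsilon_1=G^{-1}(t)$ and $\epsilon_2=G^{-1}(t')$ gives $\E{B(x,t)B(x,t')}=G\bigl(G^{-1}(t)\bigr)=t$; by symmetry $\E{B(x,t)B(x,t')}=\min(t,t')$ in general. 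Hence $B(x,\cdot)$ is a centred Gaussian process with the covariance of Brownian motion, so (invoking the usual sample-path continuity, which is already available for the sphere average) it is one, and consequently for $t\ge t_1$ the shifted process $\bigl(B(x,t)-B(x,t_1)\bigr)_{t\ge t_1}$ is again a standard Brownian motion started from $0$.

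For part~(b), since the two families $\{B(x,s)-B(x,s_1)\}_{s_1\le s\le s_2}$ and $\{B(y,t)-B(y,t_1)\}_{t_1\le t\le t_2}$ are jointly Gaussian, their independence is equivalent to the vanishing of every cross-covariance, i.e.\ to
\[
\E{B(x,s)B(y,t)}-\E{B(x,s)B(y,t_1)}-\E{B(x,s_1)B(y,t)}+\E{B(x,s_1)B(y,t_1)}=0
\]
for all $s\in[s_1,s_2]$ and $t\in[t_1,t_2]$. Each of the four terms equals $\E{\ca{x}{\alpha}\ca{y}{\beta}}$ for some radius $\alpha\in\{G^{-1}(s),G^{-1}(s_1)\}$ lying in the radial range $J_x:=[G^{-1}(s_2),G^{-1}(s_1)]$ of the first annulus and some $\beta\in\{G^{-1}(t),G^{-1}(t_1)\}$ lying in the radial range $J_y$ of the second. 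The key claim I would establish is that, under the disjointness hypothesis, the map $(\alpha,\beta)\mapsto\E{\ca{x}{\alpha}\ca{y}{\beta}}$ restricted to $J_x\times J_y$ depends on at most one of the two variables; granting this, the four-term alternating sum telescopes to $0$ and we are done. This is of course a manifestation of the spatial Markov property of the field.

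To prove that claim I would sort out the configurations of the pair of balls $D(x,\alpha)$, $D(y,\beta)$ compatible with the two annuli being disjoint. If the closed outer balls $\overline{D(x,G^{-1}(s_1))}$ and $\overline{D(y,G^{-1}(t_1))}$ are disjoint, then so are $\overline{D(x,\alpha)}$ and $\overline{D(y,\beta)}$ for all $\alpha\in J_x$, $\beta\in J_y$, and \eqref{eq:cov nonoverlap} gives the \emph{constant} value $\tfrac{1}{2\pi^{2}}K_{0}(|x-y|)$. If $x=y$, disjointness of the two shells forces $J_x$ and $J_y$ to be essentially disjoint intervals, say with $J_x$ entirely below $J_y$; then \eqref{eq:cov concentric} gives $G(\max(\alpha,\beta))=G(\beta)$, a function of $\beta$ alone (and the borderline case $\alpha=\beta$ of coinciding endpoints gives the same). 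Finally, if $x\ne y$ and the outer balls overlap, a short estimate with the distances $d\pm r$ along the line through $x$ and $y$ shows that one of the closed outer balls must sit inside the open hole of the other annulus, say $\overline{D(y,G^{-1}(t_1))}\subseteq D(x,G^{-1}(s_2))$, whence $D(y,\beta)\subseteq D(x,\alpha)$ for all $\alpha\in J_x$, $\beta\in J_y$; the third covariance identity above then expresses the covariance as $I_0(|x-y|)\,G(\alpha)-\tfrac{1}{4\pi^2}\,I_2(|x-y|)\big/\bigl(I_1^2(\alpha)-I_0(\alpha)I_2(\alpha)\bigr)$, a function of $\alpha$ alone. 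In every case the dependence is on at most one variable, as claimed.

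The step I expect to be the main obstacle is precisely this last geometric bookkeeping: verifying that ``the annuli $D(x,G^{-1}(s_1))\setminus D(x,G^{-1}(s_2))$ and $D(y,G^{-1}(t_1))\setminus D(y,G^{-1}(t_2))$ are disjoint'' really does reduce to exactly the three configurations above, and that the strict inclusions or exclusions used persist uniformly as $\alpha$ ranges over $J_x$ and $\beta$ over $J_y$, with the endpoint radii handled, where needed, by continuity of the Bessel expressions. Once that is in place, part~(a) and the telescoping in part~(b) are immediate.
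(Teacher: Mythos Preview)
Your argument is correct. For part~(a) you proceed exactly as the paper does: use \eqref{eq:cov concentric} together with the monotonicity of $G^{-1}$ to identify the covariance as $\min(t,t')$, and conclude from Gaussianity.

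For part~(b) you are in fact more careful than the paper's sketch. The paper argues only the separated case, asserting that disjointness of the annuli gives $|x-y|>G^{-1}(t_1)+G^{-1}(s_1)$, and then applies \eqref{eq:cov nonoverlap} to see that each of the four cross-covariances equals the constant $\tfrac{1}{2\pi^{2}}K_{0}(|x-y|)$, so the alternating sum vanishes. Your full case analysis---treating also the concentric configuration via \eqref{eq:cov concentric} and the nested configuration via the third covariance identity---covers every geometry in which two spherical shells can be disjoint, and the organizing observation that in each case the covariance depends on at most one of the two radii (so the four-term expression telescopes) is exactly right. The connectedness argument you allude to, together with the $d\pm r$ estimate along the line through $x$ and $y$, does force one outer ball inside the other's hole in the overlapping, non-concentric case; the borderline equalities are indeed absorbed by continuity of the Bessel expressions. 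So your version establishes the lemma as stated, whereas the paper's sketch literally covers only the configuration it needs most directly.
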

 \begin{proof}
(a)  It follows from \eqref{eq:cov concentric} that for $t_1\le s\le t$ one has
 \begin{eqnarray*}&&
  \mathrm{Cov}_{\mathcal W}(B(x,t)-B(x,t_1), B(x,s)-B(x,t_1))=\\
 &&=G(G^{-1}(s))-G(G^{-1}(t_1))-G(G^{-1}(t_1))+G(G^{-1}(t_1))=s-t_1
 \end{eqnarray*}
 Here we have used the fact that $G(\cdot)$ and $G^{-1}(\cdot)$ are decreasing functions and hence, as $t_1\le s\le t$ we have $G^{-1}(t_1)\ge G^{-1}(s)\ge G^{-1}(t)$.

(b)  As the annuli are disjoint it follows that
 $|x-y|>G^{-1}(t_1)+G^{-1}(s_1)\ge  G^{-1}(t)+G^{-1}(s)\ge G^{-1}(t_1)+G^{-1}(s_1)$ and hence again using \eqref{eq:cov nonoverlap} we obtain
 $$\mathrm{Cov}_{\mathcal W}\left(B(y,t)-B(y,t_1), B(x,s)-B(x,s_1)\right)= 0.$$

 \end{proof}
\subsection{Proof of Theorem~\ref{theo:support}}

Let $\Gamma$ be a compact subset of $\mathbb R^4$. Let $\mathcal B(\Gamma)$ be the Borel sigma algebra of subsets of $\Gamma$. We define a rooted measure on $\mathcal B(\Theta)\otimes \mathcal B(\Gamma)$ as
$$\cM(\mathrm d x \mathrm d \theta)=\frac{ m^\theta(\mathrm d x) \mathcal W(\mathrm d \theta)}{|\Gamma|}.$$
Here $|\Gamma|$ denotes the volume of the set $\Gamma$ with respect to the Lebesgue measure. Note that $\cM( \Theta\times \Gamma)= \E{m^\theta(\Gamma)}|\Gamma|^{-1}=1$ and as such $\cM$ is a probability measure on the space $\Gamma\times \Theta$.

Let  $r(t):=G^{-1}(t+G(R))$, $R>0$ fixed and define $$\widetilde B(x,t)(\theta):=\ca{x}{r(t)}(\theta)-\ca{x}{R}(\theta).$$
The following lemma allows us to view the random measure $m^\theta$ in a different way. We show that the joint distribution of $(x, \widetilde{B}(x,t))$ under $\cM(\mathrm d x \mathrm d \theta)$ is nothing but the distribution of 
$(x, \widetilde{B}(x,t)+\gamma t)$ under $\mathcal W(\mathrm d \theta) \mathrm d x$ and in the latter case the marginal on $\Theta$ does not depend on $x$.
\begin{lemma}\label{lem:transformation}
 Let $0<\gamma^2<2\pi^2$. For any compact set $\Gamma$ and any $F\in C_c(\mathbb R^4\times \mathbb R)$ we have
 \begin{equation}
 \int_\Theta \int_\Gamma F(x, \widetilde{B}(x,t)(\theta)) \cM(\mathrm d x \mathrm d \theta)=\frac1{|\Gamma|}\int_\Gamma \int_\Theta F(x, \widetilde{B}(x,t)(\theta)+\gamma t)\mathcal W(\mathrm d \theta) \mathrm d x.
 \end{equation}
\end{lemma}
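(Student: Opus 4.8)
The plan is to verify the identity by a Cameron–Martin / Girsanov-type change of measure argument, exploiting the fact that $m^\theta$ is the limit of the approximating measures $m^\theta_{\eps_n}$. First I would reduce to the approximating level: by the $L^2(\mathcal W)$-convergence in \eqref{eq:weak} and dominated convergence it suffices to prove, for each $n$ with $\eps_n<R$,
\[
\frac1{|\Gamma|}\int_\Gamma\int_\Theta F\big(x,\widetilde B(x,t)(\theta)\big)\,E^\theta_{\eps_n}(x)\,\mathcal W(\mathrm d\theta)\,\mathrm d x
=\frac1{|\Gamma|}\int_\Gamma\int_\Theta F\big(x,\widetilde B(x,t)(\theta)+\gamma t\big)\,\mathcal W(\mathrm d\theta)\,\mathrm d x,
\]
and then pass to the limit $n\to\infty$ on both sides (the right-hand side is already independent of $n$, and on the left one recognizes $\cM$ after integrating against $m^\theta_{\eps_n}/|\Gamma|$; one must check that $F(x,\widetilde B(x,t))$ is a bounded continuous functional so that weak convergence applies, which is immediate since $F\in C_c$). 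Fixing $x$, the inner integral on the left is $\mathsf E_{\mathcal W}\!\big[F(x,\widetilde B(x,t))\exp(\gamma\mathcal I(h_{\mu^x_{\eps_n}})-\tfrac{\gamma^2}2 G(\eps_n))\big]$.

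The heart of the matter is then a one-dimensional Gaussian computation. Both $\widetilde B(x,t)=\mathcal I(h_{\mu^x_{r(t)}})-\mathcal I(h_{\mu^x_R})$ and $\mathcal I(h_{\mu^x_{\eps_n}})$ are jointly Gaussian, and by the concentric covariance formula \eqref{eq:cov concentric} one computes, for $\eps_n\le r(t)\le R$ (i.e. $t\ge 0$), that
\[
\cov{\widetilde B(x,t),\ \mathcal I(h_{\mu^x_{\eps_n}})}
=G(r(t))-G(R)=t,
\]
using $G(r(t))=t+G(R)$; crucially this does not depend on $\eps_n$. Hence by the standard Gaussian shift identity $\mathsf E[f(\xi)\,\e^{\gamma\eta-\frac{\gamma^2}2\mathsf E[\eta^2]}]=\mathsf E[f(\xi+\gamma\,\cov{\xi,\eta})]$ applied with $\xi=\widetilde B(x,t)$ and $\eta=\mathcal I(h_{\mu^x_{\eps_n}})$, the left-hand inner expectation equals $\mathsf E_{\mathcal W}[F(x,\widetilde B(x,t)+\gamma t)]$. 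This is exactly the integrand of the right-hand side, and it is manifestly independent of $x$ (the law of $\widetilde B(x,\cdot)$ does not depend on $x$ by Lemma~\ref{lem:time inversion}(a)), so integrating $x$ over $\Gamma$ and dividing by $|\Gamma|$ finishes the approximating identity.

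The main obstacle I anticipate is the interchange of the limit $n\to\infty$ with the $x$-integration and with the functional $F$: one needs that $x\mapsto F(x,\widetilde B(x,t)(\theta))$ is (for $\mathcal W$-a.e.\ $\theta$) a bounded continuous function of $x$ with compact support uniformly controlled, so that $\int_\Gamma F(x,\widetilde B(x,t))\,m^\theta_{\eps_n}(\mathrm d x)\to\int_\Gamma F(x,\widetilde B(x,t))\,m^\theta(\mathrm d x)$ by \eqref{eq:weak}, and then that the resulting convergence holds in $L^1(\mathcal W)$ (this follows from the $L^2(\mathcal W)$ statement in \eqref{eq:weak} together with boundedness of $F$). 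A secondary technical point is measurability/continuity of $(x,\theta)\mapsto\widetilde B(x,t)(\theta)$, which should be handled exactly as the analogous circular-average continuity statements in \cite{HMP}; I would cite that rather than reprove it. Everything else is the elementary Gaussian identity above.
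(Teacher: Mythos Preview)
Your proposal is correct and follows essentially the same route as the paper: approximate $m^\theta$ by $m^\theta_{\eps_n}$, use the weak/$L^2$ convergence \eqref{eq:weak} together with boundedness of $F$ and continuity of $x\mapsto\widetilde B(x,t)$ to pass to the limit, compute $\cov{\widetilde B(x,t),\mathcal I(h_{\mu^x_{\eps_n}})}=t$ from \eqref{eq:cov concentric}, and apply the Cameron--Martin/Gaussian shift. The only quibble is the parenthetical claim that the resulting integrand is ``manifestly independent of $x$'': it is not, since $F$ depends on $x$ through its first argument, but this remark is unnecessary for the argument and can simply be dropped.
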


\begin{proof}
Note that for almost every $\theta$, the map $x\in \Gamma\mapsto F(x, \widetilde{B}(x,t)(\theta))$ is continuous by Corollary 3 of \cite{CJ}. So from the weak convergence in~\eqref{eq:weak} we have that
$$\lim_{n\to\infty} \int_\Gamma F(x, \widetilde{B}(x,t))m^\theta_{\eps_n}(\mathrm d x)= \int_\Gamma F(x, \widetilde{B}(x,t))m^\theta(\mathrm d x).$$
Since the function in the integral is bounded we have for some constant $C$ and  for all $n$
$$\int_\Theta\int_\Gamma F(x, \widetilde{B}(x,t))m^\theta_{\eps_n}(\mathrm d x) \mathcal W(\mathrm d \theta)\le C|\Gamma|.$$
So by dominated convergence 
\begin{equation}\label{eqL1}
 \lim_{n\to\infty} \frac{1}{|\Gamma|}\int_\Theta\int_\Gamma F(x, \widetilde{B}(x,t))m^\theta_{\eps_n}(\mathrm d x) \mathcal W(\mathrm d \theta)=\int_\Theta\int_\Gamma F(x, \widetilde{B}(x,t) )\cM(\mathrm d x \mathrm d \theta).
\end{equation}
Note that for small enough $\epsilon>0$ 
$$ \mathrm{Cov}( \widetilde{B}(x,t), h_{\mu^x_{\eps}})= G(r(t))- G(R)= t$$
holds, so for $n$ large enough we have by Cameron-Martin theorem
\begin{align*}
\int_\Theta\int_\Gamma F(x, \widetilde{B}(x,t))m^\theta_{\eps_n}( \mathrm d x) \mathcal W(\mathrm d \theta)&=\int_\Theta\int_\Gamma F(x, \widetilde{B}(x,t))E^\theta_{\eps_n}(x)\mathrm d x \mathcal W (\mathrm d \theta)\\
&=\int_\Gamma \int_\Theta F(x, \widetilde{B}(x,t)(\theta)+\gamma t) \mathcal W(\mathrm d \theta)\mathrm d x.
\end{align*}
We have the required statement in the lemma using~\eqref{eqL1}.

\end{proof}
\begin{proof}[Proof of Theorem~\ref{theo:support}]
Using the fact $\mathrm{E}_{\mathcal W}\left[m^\theta(A)\right]=|A|$ for any bounded set $A$ it follows that the marginal of $\cM$ on $\Gamma$ is nothing but the normalized Lebesgue measure on $\Gamma$. Hence by Theorem 9.2.2.\ of \cite{Str10} there exists a Borel measurable map
$$x\in \Gamma \rightarrow \mathcal L_x(\cdot) \in M_1(\Theta),$$
where $M_1(\Theta)$ is the set of probability measures on $\Theta$ and the following holds
$$\cM( \mathrm d x \mathrm d \theta)= \mathcal L_x(\mathrm d \theta) \frac{\mathrm d x}{|\Gamma|}.$$

Note that $\mathcal L_x(\mathrm d \theta)$ is nothing but the regular conditional probability. Now using the above decomposition we have that
\begin{equation*}
 \int_\Theta \int_\Gamma F(x, \widetilde{B}(x,t)) \cM(\mathrm d x \mathrm d \theta)= \frac1{|\Gamma|}\int_{\Gamma}\int_\Theta F(x, \widetilde{B}(x,t))\mathcal{L}_x(\mathrm d \theta) \mathrm d x.
\end{equation*}
So from \eqref{lem:transformation} we have for any compact set $\Gamma$ and $F\in C_c(\mathbb R^4\times \mathbb R)$
\begin{equation}\label{eqL2}
\frac1{|\Gamma|}\int_{\Gamma}\int_\Theta F(x, \widetilde{B}(x,t))\mathcal{L}_x(\mathrm d \theta) \mathrm d x
=\frac1{|\Gamma|}\int_\Gamma \int_\Theta F(x, \widetilde{B}(x,t)+\gamma t)\mathcal W(\mathrm d \theta) \mathrm d x.
\end{equation}

If we denote  $\mu_x$ to be law of $\widetilde{B}(x,t)$ under $\mathcal{L}_x(\mathrm d \theta)$ and  $\nu$ be the law $\widetilde{B}(x,t)+\gamma t$ under $ \mathcal W (\mathrm d \theta)$ on $\mathbb R$ it is possible to see that $\nu$ is the law of a standard Brownian motion with a drift.
Since ~\eqref{eqL2} holds for any compact set $\Gamma$, it is easy to show that for almost every $x\in \mathbb R^4$,  $\mu_x=\nu$. If we take $a=\gamma^2/4\pi^2$ and use the fact that the sphere average process is a time inversion of a Brownian motion (see Lemma~\ref{lem:time inversion}), then the set of thick points can also be written as
$$T(a)=\left\{x\in\mathbb R^4: \lim_{t\to\infty}\frac{\widetilde{B}(x,t)}{t}=\gamma\right\}.$$
Now from the discussion above we have that

$$\cM( T(a)^c) = \frac1{|\Gamma|}\int_\Gamma \mathcal{L}_x(T(a)^c)\mathrm d x.$$
Since the law of $\widetilde{B}(x,t)$ under $\mathcal L_x$ is the same as the law of Brownian motion with a drift, the condition for the thick points gets satisfied with probability 1. So we have $\cM(T(a)^c)=0$,  which together with the fact that $m^\theta(\cdot)$ is a positive measure with probability $1$ proves the result.
\end{proof}

\section{Upper bound of Theorem~\ref{main theorem}}\label{proof-upper}
In this section we prove the upper bound. By the countable stability property, viz.
$$\displaystyle \hbox{dim}_{\mathcal H}\left( \bigcup_{i=1}^\infty E_i \right) = \sup_{1 \leq i \leq \infty} \hbox{dim}_{\mathcal H}( E_i )$$
it is enough to show that for $R\ge 1$
\begin{equation}
\hbox{dim}_{\mathcal H}{T_\ge(a,R)}=\hbox{dim}_{\mathcal H}\left\{x\in D(0,R):\limsup_{\epsilon\to 0}\frac{\mathcal{I}\left(h{}_{\mu_{\epsilon}^{x}}\right)}{\sqrt{2 \pi^2} G(\epsilon)}\geq\sqrt{2a}\right\}\le 4-a
\end{equation}
almost surely. Hence if we cover $\mathbb R^4$ with a countable union of balls of radius $R=1,\, 2,\,\ldots$, this will prove the upper bound.
The next proposition gives the local H\"older continuity of the process and through it we can determine a modification of the process
which has some uniform estimates on the increments.
It is similar to Proposition 2.1 of \cite{HMP} and uses Lemma C.1 of \cite{HMP}. The proof also uses some finer estimates on the covariance functions and some bounds on Bessel functions which are provided in the Appendix.

\begin{proposition}\label{prop:KC}
There exists a modification $\widetilde{X}$ of the process $\{\ca{z}{t}:\, z\in D(0,R),\,t\in(0,1)\}$ such that for every $0<\gamma<\frac12$ and $\eps, \zeta>0$ there exists $M>0$ such that the following holds:
\begin{equation}\label{eq:KC} |\widetilde{X}(z,r) - \widetilde{X}(w,s)| \leq M \left( \log \frac{1}{r} \right)^\zeta \frac{|(z,r) - (w,s)|^\gamma}{r^{(1+\eps)\gamma}},\end{equation}
for all $z,w \in D(0,R)$ and $r,s \in (0,1]$ with $1/2 \leq r/s \leq 2$.

\end{proposition}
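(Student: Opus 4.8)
The plan is to follow the Kolmogorov--Centsov strategy exactly as in \cite[Proposition 2.1]{HMP}: establish a moment bound of the form
\begin{equation*}
\mathsf E_{\mathcal W}\left[\left|\ca{z}{r}-\ca{w}{s}\right|^{2n}\right]\le C_n\,\frac{|(z,r)-(w,s)|^{n}}{r^{(1+\eps')n}}
\end{equation*}
uniformly over $z,w\in D(0,R)$ and $r,s\in(0,1]$ with $1/2\le r/s\le 2$, and then feed this into the multiparameter Kolmogorov criterion (Lemma C.1 of \cite{HMP}) to get the modification $\widetilde X$ with the stated modulus of continuity, picking $n$ large enough that the exponents work out. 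Since the family $\{\ca{z}{r}\}$ is Gaussian, the $2n$-th moment of the increment is controlled by its variance, so everything reduces to the single estimate
\begin{equation*}
\mathsf E_{\mathcal W}\left[\left(\ca{z}{r}-\ca{w}{s}\right)^{2}\right]=G(r)+G(s)-2\,\mathsf E_{\mathcal W}\left[\mathcal I(h_{\mu_r^z})\mathcal I(h_{\mu_s^w})\right]\le C\,\frac{|(z,r)-(w,s)|}{r^{1+\eps'}}.
\end{equation*}

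To prove this variance bound I would split into the two natural regimes. First, the radial increment at a fixed point: using \eqref{eq:cov concentric}, $\mathsf E_{\mathcal W}[(\ca{z}{r}-\ca{z}{s})^2]=G(s)-G(r)$ for $s\le r$, and since $G$ is smooth on $(0,1]$ with $G(r)\sim -\tfrac1{2\pi^2}\log r$, its derivative is $O(1/r)$, so $G(s)-G(r)\le C|r-s|/s\le C'|r-s|/r$ on the range $1/2\le r/s\le 2$. Second, the spatial increment $\mathsf E_{\mathcal W}[(\ca{z}{r}-\ca{w}{r})^2]$ with $|z-w|$ small: here one uses the non-overlapping or nested covariance formulas \eqref{eq:cov nonoverlap} and item (3) together with the small-argument expansions of the Bessel functions $I_0,I_1,I_2,K_0,K_1$ and the asymptotics of $G$. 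The point is that both $G(r)$ and the cross-covariance blow up like $-\tfrac1{2\pi^2}\log r$ as $r\to0$, and their difference is controlled by $|x-y|$ times a factor that is at worst $1/r$-ish; these are precisely the "finer estimates on the covariance functions and some bounds on Bessel functions" the text promises in the Appendix. Combining the two regimes by the triangle inequality in $L^2(\mathcal W)$ gives the claimed bound on $\mathsf E_{\mathcal W}[(\ca{z}{r}-\ca{w}{s})^2]$, with the loss $r^{-(1+\eps)}$ absorbing the fact that the modulus degenerates as $r\to0$; one then Gaussian-amplifies to the $2n$-th moment and applies Lemma C.1.

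I expect the main obstacle to be the careful bookkeeping of the Bessel-function asymptotics in the nested/overlapping case, specifically controlling $I_0(|x-y|)G(r)-\tfrac1{4\pi^2}I_2(|x-y|)/(I_1^2(r)-I_0(r)I_2(r))$ against $G(r)$ so that the difference is genuinely $O(|x-y|\,(\log(1/r))^{\text{power}}/r)$ and not something worse --- the denominator $I_1^2(r)-I_0(r)I_2(r)$ vanishes like $r^2$ as $r\to0$, which is exactly what makes $G(r)$ logarithmically large, and one must check this cancellation is stable under the spatial perturbation. The rest (the radial estimate, the Gaussian moment amplification, invoking the Kolmogorov--Centsov lemma and stripping off the arbitrary $\eps,\zeta$) is routine and follows \cite{HMP} essentially verbatim.
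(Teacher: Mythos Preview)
Your proposal is correct and follows essentially the same route as the paper: bound the variance of the increment by $C(|x-y|+|\epsilon_1-\epsilon_2|)/(\epsilon_1\wedge\epsilon_2)$ via a case analysis on the geometry of the two spheres (concentric, disjoint, nested), amplify to high Gaussian moments, and invoke \cite[Lemma~C.1]{HMP}. The paper's case split is organized slightly differently---it bounds $|H_{\epsilon_1,\epsilon_1}(x,x)-H_{\epsilon_1,\epsilon_2}(x,y)|$ directly in each geometric regime rather than separating radial and spatial increments and recombining by the triangle inequality---but the ingredients (the covariance formulas \eqref{eq:cov concentric}--\eqref{eq:cov nonoverlap}, the Bessel asymptotics $I_1^2(r)-I_0(r)I_2(r)\asymp r^2$, and $G(r)\sim -\tfrac{1}{2\pi^2}\log r$) and the resulting bound are the same, and in fact the paper gets $r^{-1}$ rather than $r^{-(1+\eps')}$ in the denominator.
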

\begin{proof}
Consider now $x,\,y \in D(0,R)$, $\epsilon_1,\,\epsilon_2\in(0,1)$ and we abbreviate
$$H_{\eps_1,\eps_2}(x,y):=\cov{\ca{x}{\eps_1},\,\ca{y}{\eps_2}}.$$
We distinguish between three cases:
\begin{description}
 \item[Case 1] Let $x=y$.  By Lemma \ref{lemma:bound_G}, we have
      \begin{align*}
   \abs{H_{\epsilon_1,\,\epsilon_1}(x,x)-H_{\epsilon_2,\,\epsilon_1}(x,x)}&\leq \abs{H_{\epsilon_1,\,\epsilon_1}(x,x)-H_{\epsilon_1,\,\epsilon_2}(x,x)}+\abs{H_{\epsilon_2,\,\epsilon_1}(x,x)-H_{\epsilon_1,\,\epsilon_2}(x,x)} \\
&\stackrel{\eqref{eq:cov concentric}}{\leq}\abs{G(\epsilon_1)-G{(\epsilon_1\vee \epsilon_2)}}+\abs{G(\epsilon_2)-G{(\epsilon_1\vee \epsilon_2)}}\\
&\leq C\frac{\abs{\epsilon_1-\epsilon_2}}{\epsilon_1\wedge \epsilon_2}.
      \end{align*}
Here we have used that $\abs{\log(x/y)}\leq \frac{\abs{x-y}}{x \wedge y}$.
\item[Case 2] Let $\overline{D(x,\,\eps_1)}\cap\overline{D(y,\,\eps_2)}=\emptyset$. In this case $\abs{x-y}>\epsilon_1+\epsilon_2>\epsilon_1$. Then
\begin{align*}
\abs{H_{\epsilon_1,\epsilon_1}(x,x)-H_{\epsilon_1,\epsilon_2}(x,y)}&=\abs{G(\epsilon_1)-\frac{1}{2 \pi^2}K_0(\abs{x-y})}\\
&\leq -C(\log \epsilon_1+\log(\abs{x-y}))\leq \frac{\abs{x-y}}{\epsilon_1}.
\end{align*}
Similarly one can show that $\abs{H_{\epsilon_2,\epsilon_2}(y,y)-H_{\epsilon_1,\epsilon_2}(x,y)}\leq  \frac{\abs{x-y}}{\epsilon_1}$.
\item[Case 3] Let $\overline{D(y,\,\epsilon_2)}\subseteq D(x,\,\epsilon_1)$.
\begin{align*}
\abs{H_{\epsilon_1,\,\epsilon_1}(x,x)-H_{\epsilon_1,\,\epsilon_2}(x,y)}&\leq \abs{G(\epsilon_1)(1-I_0(\abs{x-y}))}
+C\frac{I_2(\abs{x-y})}{I_1^2(\epsilon_1)-I_0(\epsilon_1)I_2(\epsilon_1)}\\
&\leq-C\log\epsilon_1\abs{x-y}^2+\frac{\abs{x-y}^2}{\epsilon_1^2}\leq C\frac{\abs{x-y}}{\epsilon_1}.
\end{align*}

\end{description}
Combining these three cases we obtain that
\begin{equation}
 \var{\ca{x}{\epsilon_1}-\ca{y}{\epsilon_2}}\leq C\frac{\abs{x-y}+\abs{\epsilon_1-\epsilon_2}}{\epsilon_1\wedge \epsilon_2}.
\end{equation}
Since $\ca{x}{\epsilon_1}-\ca{y}{\epsilon_2}$ is Gaussian,
$$
\E{\abs{\ca{x}{\epsilon_1}-\ca{y}{\epsilon_2}}^\alpha}\le C \left(\frac{\abs{x-y}+\abs{\epsilon_1-\epsilon_2}}{\epsilon_1\wedge \epsilon_2}\right)^{\alpha/2}.
$$
We can find $\alpha$ and $\beta$ large enough such that $\abs{\f{\beta}{\alpha}-\f{1}{2}}<\delta$, and consequently by \cite[Lemma C.1]{HMP} there exists a modification $\widetilde X(x,\epsilon)=\ca{x}{\epsilon}$
a.s.\ on $L^2(\mathcal W)$ satisfying \eqref{eq:KC}.
\end{proof}

In this section for the proof of the upper bound we work with this modification which we also denote by $\ca{x}{t}$. Recall that $B(x,t)= \ca{x}{G^{-1}(t)}$.

\begin{proof}[Proof of the upper bound]

Let $\varepsilon>0$ and $\gamma \in (0,1/2)$, $\zeta \in (0,1)$ and denote $\tilde \gamma:=(1+\varepsilon)\gamma$. Also let $K:=\varepsilon^{-1}$, $r_n:= n^{-K}$.

Define the set
$$U_R:=
\left\{x\in D(0,R):\limsup_{n\to +\infty}\frac{\mathcal{I}\left(h{}_{\mu_{r_n}^{x}}\right)}{\sqrt{2 \pi^2} G(r_n)}\geq\sqrt{2a}\right\}.
$$
We first show that
\begin{equation}\label{upper:subset}
 T_{\ge}(a, R)\subset U_R.
\end{equation}
For $x \in T_{\geq }(a,R)$  and for $t\in (G(r_n),\,G(r_{n+1}))$ we write $B(x,G(r_n))=B(x,G(r_n))-B(x,t)+B(x,t)
$ .  By Proposition~\ref{prop:KC} we have
\begin{align}
\abs{B(x,t)-B(x,G(r_n))}&\leq M\left(\log\left(\frac{1}{G^{-1}(t)}\right)\right)^\zeta\frac{\left(G^{-1}(t)-r_n\right)^\gamma}{G^{-1}(t)^{\tilde \gamma}}\nonumber\\
&\le M(\log(n+1))^\zeta \frac{\left(r_{n+1}-r_n\right)^\gamma}{r_{n+1}^{\tilde \gamma}}
=\O{  (\log n)^\zeta}.\label{eq:bound_O_log}
\end{align}

Hence using the fact that $G(r_n)\sim C \log n$ for $n\to +\infty$ and $\zeta<1$ we have
$$
\abs{\frac{B(x,G(r_n))-B(x,t)}{\sqrt{2 \pi^2} G(r_n)}}=\O{\frac{(\log n)^\zeta}{G(r_n)}}=\o{1}.
$$
Now \eqref{upper:subset} follows as we have
$$
\limsup_{n\to +\infty}\frac{B(x,G(r_n))}{\sqrt{2 \pi^2} G(r_n)}\geq \limsup_{t\to +\infty}\frac{B(x,t)}{\sqrt{2 \pi^2} t}\geq\sqrt{2 a}.
$$

The next step is to determine a cover for the set $U_R$. In view of that, let $(x_{nj})_{j=1}^{\bar k_n}$ be a maximal collection of points in $D(0,R)$ such that $\inf_{l\neq j}\abs{x_{nj}-x_{nl}}\ge r_n^{1+\varepsilon}$.
Denote
$$
\mathcal A_n:=\left\{j:\,\frac{\abs{B(x_{nj},G(r_n))}}{\sqrt{2 \pi^2} G(r_n)}\geq \sqrt{2a} -\delta(n)\right\}
$$
with $\delta(n)=C(\log n)^{\zeta-1}$ (the constant $C$ will be tuned later according to \eqref{eq:const_C}).
For any $x \in D(0,R)$, there exists $j\in\left\{1,\,\ldots,\,\overline k_n\right\}$ such that $x\in D\left(x_{nj},r_n^{1+\varepsilon}\right)$.
By \eqref{eq:bound_O_log} we have,
\begin{eqnarray}
&&\frac{\abs{B(x_{nj},G(r_n))-B(x,G(r_n))}}{\sqrt{2 \pi^2} G(r_n)}\leq C(\log n)^\zeta\frac{\abs{x-x_{nj}}^\gamma}{G(r_n)^{\tilde \gamma+1}}\nonumber\\
&&=\delta(n)\frac{\log n}{G(r_n)}\leq C\delta(n)\label{eq:const_C}
\end{eqnarray}
which implies, renaming possibly $\delta(n)$,
$$
\frac{B(x_{nj},G(r_n))}{G(r_n)}\geq \sqrt{2 a}-\delta(n).
$$
 Hence we have $j\in \mathcal A_n$. Therefore for all $N\geq 1$,  $\bigcup_{n\geq N}\bigcup_{j\in \mathcal A_N}D\left(x_{nj},r_n^{1+\varepsilon}\right)$ covers $U_R$ with sets having maximal diameter $2 r_n^{1+\varepsilon}$.
Next we claim that
\begin{equation}\label{upper:expectation}
\E{|\mathcal A_n|}\le C (\log n)r_n^{a-4(1+\varepsilon)+\mathrm{o}(1)}.
\end{equation}

Assume~\eqref{upper:expectation} for the moment. If we choose $\alpha:=4-a+\varepsilon\frac{4+a}{1+\varepsilon}$ we have
\begin{eqnarray*}
&&\E{\sum_{n\ge N}\sum_{j \in \mathcal A_n}\diam(D(x_{nj},r_n^{1+\varepsilon}))^\alpha}\le \sum_{n\ge N}(\log n) r_n^{(1+\varepsilon)\alpha+a-4(1+\varepsilon)+\o{1}}\\
&&\le \sum_{n\ge N}(\log n) r_n^{4\varepsilon+\o{1}}=C\sum_{n\ge N}(\log n) n^{-4+\o{1}}<+\infty.
\end{eqnarray*}
Therefore $\sum_{n\ge N}\sum_{j \in \mathcal A_n}\diam(D(x_{nj},r_n^{1+\varepsilon}))^\alpha<+\infty$ a.s.\ and this implies  $\dime_{\mathcal H}(T_\ge(a,r))\le 4-a$ a.s.\ by letting $\varepsilon \downarrow 0$.
This completes the proof of the upper bound provided we show~\eqref{upper:expectation}. We first estimate $\prob{j\in \mathcal A_n}$ as follows:
\begin{eqnarray*}
&&\prob{j\in \mathcal A_n}=\prob{\frac{\abs{B(x_{nj},G(r_n))}}{\sqrt{ G(r_n)}} \ge (\sqrt{2 a}-\delta(n) )\sqrt{2\pi^2}\sqrt{G(r_n)}}\\
&&\leq C (a+\o{1})G(r_n)\exp\left( -a\left(1-\o{1}\right)^2 2\pi^2G(r_n)  \right)\leq C(\log n) r_n^{a+\o{1}},
\end{eqnarray*}
since $G(r_n)\sim -\frac{ \log r_n}{2\pi^2}$ as $n\to+\infty$. Furthermore
\begin{eqnarray*}
&& \E{|\mathcal A_n|}\leq C(\log n) \overline k_n r_n^{(a+\o{1})}\leq (\log n) r_n^{a+\o{1}-4(1+\varepsilon)}.
\end{eqnarray*}
This proves~\eqref{upper:expectation} and hence the upper bound.

Now we show that for every $R>1$, $T_\ge (a,R)$  is empty for $a>4$ using the above estimates. Note that
\begin{eqnarray*}
&&\sum_{n\ge 1}\prob{\abs{\mathcal A_n}>1}\leq \sum_{n\ge 1}\E{\abs{\mathcal A_n}}\le \sum_{n\ge 1} r_n^{a-4(1+\varepsilon)}=\sum_{n\ge 1} r_n^4<+\infty
\end{eqnarray*}
and hence by the Borel-Cantelli lemma we can conclude that, if $\varepsilon$ becomes arbitrarily small, $\abs{\mathcal A_n}=0$ eventually and so $T_\ge(a,R)$ is empty for $a>4$ with probability one.
\end{proof}

\section{Lower bound of Theorem~\ref{main theorem}}\label{proof-lower}

To derive the lower bound we use the energy method. For detailed use of this method see Section 4.3 of \cite{MoerPer}.
The $\alpha$-th energy of a measure $\mu$ is given by
\[
 I_\alpha(\mu)=\iint \frac{\mathrm d\mu(x)\mathrm d\mu(y)}{|x-y|^\alpha}.
\]

Given a set $A$, if we can find a measure $\rho$ such that $I_\alpha(\rho)<\infty$ then $\dime_{H}(A)>\alpha$.
For this, partition the hypercube $J:=[0,1]^4$ into $s_n^{-4}$ smaller hypercubes of radius $s_n=\frac{1}{n!}$. Let $x_{ni}$ be the centers of these hypercubes and $C_n$ be the set of these centers.
Define $t_m:=G(s_m)$ for all $m\le n$. Note that since $G$ is decreasing we have that $t_m$ is increasing and also using the asymptotic expansion of $G$ we have, $t_m=-\f{\log s_m}{2\pi^2}(1+\o{1})$. Let $A_m(x)$, $B_m(x)$ be the events
$$
A_m(x):=\left\{\sup_{t_m<t\le t_{m+1}}\abs{B(x,t)-B(x,t_m)-\sqrt{4a\pi^2}(t-t_m)}\le \sqrt{t_{m+1}-t_m}\right\},
$$
$$
B_m(x):=\left\{\sup_{t\ge t_{m}}\abs{B(x,t)-B(x,t_m)}-t\le 1-t_m \right\}.
$$
We say that $x$ is an {\it $n$-perfect $a$-thick point} if $E^n(x):=\bigcap_{m\le n}A_m(x)\cap B_{n+1}(x)$ occurs. Note that $B_{n+1}(x)$ is independent of the other events.
We introduce a random variable $Y_{ni}$ for $i=1,\, \cdots,\, |C_n|$ such that
$$Y_{ni}=\begin{cases}
          1 &\text{ if $x_{ni}$ is an $n$-perfect $a$-thick point,}\\
         0  & \text{ otherwise. }
         \end{cases}$$

Fix $t_m<t\le t_{m+1}$ and on the event $E^n(x)$ we have, as $m\to\infty$,
\begin{eqnarray}
 &&\lvert B(x,t)-B(x,t_1)-\sqrt{4a\pi^2}(t-t_1)\rvert=  \o{m\log m}=\o{t}.\label{eq:o(t)}
\end{eqnarray}
Define now the set of perfect $a$-thick points as
$$
P(a):=\bigcap_{k\geq 1}\overline{\bigcup_{n\geq k}\bigcup_{z\in C_n(a)}S(z,s_n)},
$$
where $C_n(a)$ is the set of centers of which $x_{ni}$ is a $n$-perfect thick point and $S(z,r)$ is a hypercube of radius $r$ centered around $z$.
Let
$$ T(a,J):=\left\{x\in J: \lim_{t\to \infty}\frac{\ca{x}{G^{-1}(t)}}{\sqrt{2\pi^2} t}=a\right\}\subset T(a).$$
\begin{lemma}
\begin{equation}\label{lemma:inclusion_thick_pts}
P(a)\subseteq T(a,J).
\end{equation}
\end{lemma}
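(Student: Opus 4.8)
The goal is to show that every point $x\in P(a)$ lies in $T(a,J)$, i.e.\ satisfies $\lim_{t\to\infty}\ca{x}{G^{-1}(t)}/(\sqrt{2\pi^2}\,t)=a$, equivalently $\lim_{t\to\infty}B(x,t)/(\sqrt{2\pi^2}\,t)=a$ where $B(x,t)=\ca{x}{G^{-1}(t)}$. The strategy is to unwind the definition of $P(a)$: if $x\in P(a)$, then for every $k$ there is an $n\ge k$ and a center $z=z(n,x)\in C_n(a)$ with $x\in S(z,s_n)$, so $|x-z|\le s_n$ (in the appropriate norm), and $z$ is an $n$-perfect $a$-thick point, meaning $E^n(z)=\bigcap_{m\le n}A_m(z)\cap B_{n+1}(z)$ holds. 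I would extract from this a sequence $n_k\to\infty$ together with centers $z_k\in C_{n_k}(a)$ with $|x-z_k|\le s_{n_k}$ and $E^{n_k}(z_k)$ occurring, and show the asymptotic growth law transfers from $z_k$ to $x$.

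\textbf{Key steps.} First, for a fixed perfect point $z$ with $E^n(z)$ occurring: telescoping the events $A_m(z)$ over $m\le n$ gives, for $t_m< t\le t_{m+1}$ with $m\le n$,
\[
\Bigl| B(z,t)-B(z,t_1)-\sqrt{4a\pi^2}\,(t-t_1)\Bigr|\le \sum_{j\le m}\sqrt{t_{j+1}-t_j}+\sqrt{t_{m+1}-t_m},
\]
and since $t_j=-\tfrac{\log s_j}{2\pi^2}(1+\o{1})=\tfrac{\log j!}{2\pi^2}(1+\o{1})\sim \tfrac{j\log j}{2\pi^2}$, the right-hand side is $\O{\sqrt{m\log m}\cdot\text{(number of terms)}}=\o{t}$, which is exactly the estimate \eqref{eq:o(t)} already recorded in the excerpt. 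Combined with the observation $\sqrt{4a\pi^2}=\sqrt{2\pi^2}\cdot\sqrt{2a}$... wait — I should be careful about the normalization: note $B(x,t)/(\sqrt{2\pi^2}\,t)\to a$ means $B(x,t)\sim a\sqrt{2\pi^2}\,t$; reconciling with the drift $\sqrt{4a\pi^2}\,(t-t_1)$ in $A_m$ one checks $\sqrt{4a\pi^2}=\sqrt{2\pi^2}\cdot\sqrt{2a}$, and the matching constant in the definition of $T(a,J)$ and in $A_m(x)$ is consistent with the intended limit (this is the same bookkeeping as in the planar case of \cite{HMP}; I would simply verify the constants line up). So from $E^n(z)$ we get $B(z,t)=\sqrt{4a\pi^2}\,t+\o{t}$ uniformly for $t\le t_{n+1}$, and the event $B_{n+1}(z)$ controls the tail $t\ge t_{n+1}$ by $|B(z,t)-B(z,t_{n+1})|\le t - t_{n+1}+1-t_{n+1}+\dots$, i.e.\ $|B(z,t)|\le Ct$ for all $t\ge t_{n+1}$, so in fact $B(z,t)=\o{t}$-corrected$+\O{t}$... more precisely $B_{n+1}(z)$ gives $\sup_{t\ge t_{n+1}}(|B(z,t)-B(z,t_{n+1})|-t)\le 1-t_{n+1}$, hence $|B(z,t)-B(z,t_{n+1})|\le t+1-t_{n+1}$, which is $O(t)$ but not obviously $a\sqrt{2\pi^2}t+\o t$; this is fine because we only need the limit for $x$, and we push $n_k\to\infty$.

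\textbf{Transfer from $z$ to $x$ and conclusion.} The point is that for each $k$ we have a center $z_k$ with $E^{n_k}(z_k)$, and $x\in S(z_k,s_{n_k})$. Evaluate at times $t$ with $t\le t_{n_k}=G(s_{n_k})$: then $|x-z_k|\le C s_{n_k}\le G^{-1}(t)$ (since $t\le t_{n_k}$ implies $G^{-1}(t)\ge s_{n_k}$ as $G$ is decreasing), so by Proposition~\ref{prop:KC} applied to the modification $\widetilde X$ — using that $G^{-1}(t)$ and the spatial displacement are both $\le G^{-1}(t)$ — we get $|B(x,t)-B(z_k,t)|=|\widetilde X(x,G^{-1}(t))-\widetilde X(z_k,G^{-1}(t))|\le M(\log\tfrac{1}{G^{-1}(t)})^\zeta \,(2G^{-1}(t))^\gamma/G^{-1}(t)^{(1+\eps)\gamma}=\O{(\log n_k)^\zeta}=\o{t}$ for $t$ in the relevant range (same computation as \eqref{eq:bound_O_log}). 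Therefore along $t=t_{n_k}$ (or better, for every fixed large $t$, choosing $k$ with $t_{n_k}\ge t$ and then refining) we obtain $B(x,t)/(\sqrt{2\pi^2}\,t)\to a$. I would phrase the final limit argument as: fix $t$ large; since $n_k\to\infty$ pick $k$ with $t\le t_{n_k}$; then $B(x,t)=B(z_k,t)+\o{t}=\sqrt{4a\pi^2}\,t+\o{t}$, and letting $t\to\infty$ (with $k=k(t)$) yields the claim. The main obstacle is the uniformity bookkeeping — making sure the $\o{t}$ error terms coming from (i) the telescoped $\sum\sqrt{t_{j+1}-t_j}$, (ii) the Hölder transfer from $z_k$ to $x$, and (iii) the asymptotic $t_m\sim \tfrac{m\log m}{2\pi^2}$ are all genuinely $\o{t}$ \emph{uniformly} as $t$ ranges up to $t_{n_k}$ and $k\to\infty$, and checking the drift constant $\sqrt{4a\pi^2}$ matches the normalization $\sqrt{2\pi^2}\,t$ defining $T(a,J)$; the underlying probabilistic content is entirely contained in the already-established estimates \eqref{eq:o(t)}, \eqref{eq:bound_O_log} and Proposition~\ref{prop:KC}.
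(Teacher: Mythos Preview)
Your plan is essentially the paper's: extract a sequence $z_{n_k}\in C_{n_k}(a)$ with $|z-z_{n_k}|\le s_{n_k}$, invoke the $\o{t}$ estimate \eqref{eq:o(t)} at the centers $z_{n_k}$, and transfer to $z$ by spatial continuity of $x\mapsto B(x,t)$. The paper's execution is much terser than yours: it simply notes that for fixed $t$ (with $t_m<t\le t_{m+1}$), the bound $|B(z_{n_k},t)-B(z_{n_k},t_1)-\sqrt{4a\pi^2}(t-t_1)|\le \sum_{j\le m}\sqrt{t_{j+1}-t_j}$ holds for every $k$ with $n_k\ge m$ and depends only on $m$; then, since the modification of Proposition~\ref{prop:KC} is spatially continuous, one lets $k\to\infty$ for that fixed $t$ and obtains the same inequality for $z$; dividing by $\sqrt{2\pi^2}t$ finishes.

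Your quantitative H\"older step, however, contains an error. You bound $|x-z_k|$ by $G^{-1}(t)$ and then claim
\[
M\Bigl(\log\tfrac{1}{G^{-1}(t)}\Bigr)^{\zeta}\,\frac{|x-z_k|^{\gamma}}{G^{-1}(t)^{(1+\eps)\gamma}}
\ \le\ M\Bigl(\log\tfrac{1}{G^{-1}(t)}\Bigr)^{\zeta}\,G^{-1}(t)^{-\eps\gamma}
=\O{(\log n_k)^{\zeta}}.
\]
This is false: $G^{-1}(t)^{-\eps\gamma}$ behaves like $\exp(2\pi^{2}\eps\gamma\,t)$, so the displayed quantity is exponentially large in $t$, not $\O{(\log n_k)^{\zeta}}$. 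The cancellation that makes \eqref{eq:bound_O_log} work is special to the \emph{time} increment along $r_n=n^{-K}$ with $K=\eps^{-1}$ and does not occur here. The fix is exactly the paper's move: do not try to make the spatial error $\o{t}$ uniformly in $t$. Instead, for each fixed $t$ use only that $|x-z_k|\le s_{n_k}\to 0$ with $G^{-1}(t)$ held fixed, so $|B(x,t)-B(z_k,t)|\to 0$; the $m$-dependent bound then passes to the limit verbatim. Your detours through $B_{n+1}(z)$ and the constant reconciliation $\sqrt{4a\pi^2}=\sqrt{2\pi^2}\sqrt{2a}$ are unnecessary: the paper's \eqref{eq:o(t)} uses only the telescoped $A_m$ events, and the constant is handled by a single division at the end.
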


%
\begin{proof}
 If $z \in P(a)$ there exists a sequence $(z_{n_k})_{k\in \N}$ of points s. t. $z_{n_k}\in C_n(a)$ for all $k$ and $\abs{z-z_{n_k}}\le s_n$. For $m$ s. t. $t_m<t\le t_{m+1}$
 $$
 \left\lvert B(z_{n_k},t)-B(z_{n_k},t_1)-\sqrt{4a\pi^2}(t-t_1)\right\rvert=\o{t}
 $$
 follows as in \eqref{eq:o(t)}. Since the Brownian motion is a.s.\ continuous taking the limit for $k\to+\infty$
 $$
 \abs{B(z,t)-B(z,t_1)-\sqrt{4a\pi^2}(t-t_1)}=\o{t}
 $$
 and dividing by $\sqrt{2\pi^2}t$
 $$
\left|\frac{\ca{z}{G^{-1}(t)}}{\sqrt{2\pi^2}t}-\sqrt{2 a}\right|=\o{1}
 $$
 which is an equivalent formulation of the set of thick points.
 \end{proof}

Next we make preparations to define a measure $\mu$ supported on $P(a)$ with positive probability. For this purpose define a sequence of measures $\mu_n$ on $J$ supported on $n$-perfect thick points.
\begin{equation}
 \mu_n(\cdot)=\sum_{i=1}^{|C_n|}\frac{1}{\prob{E^n(x_{ni})}}\one_{\left\{Y_{ni}=1\right\}} \lambda\left(\cdot \cap S(x_{ni},s_n)\right),
\end{equation}
where $\lambda(\cdot)$ is the Lebesgue measure.

In the following lemma we list down some important properties of this measure.

\begin{lemma}\label{lemma:three_steps}
 Let $\mu_n(\cdot) $ be as above. Then the following hold:
\begin{itemize}
\item[(a)] $\E{\mu_n(J)}=1$;
\item[(b)] $\sup_{n}\E{\mu_n(J)^2}<\infty$;
\item[(c)] $\sup_n \E{I_\alpha(\mu_n)}<\infty$;
\item[(d)] there exist $a,b\in(0,\infty)$ such that for all $n$ we have
$$\prob{b\le \mu_n(J)<b^{-1}, I_\alpha(\mu_n)<a}>0$$
for any $\alpha \leq 4-a$.
\end{itemize}

\end{lemma}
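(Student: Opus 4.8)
The plan is to prove (a) by a direct count, to derive (b) and (c) as second-moment estimates resting on a single correlation bound for the events $E^n(x)$, and to extract (d) from (a)--(c) by the Paley--Zygmund and Markov inequalities. Throughout I take $0<a<4$, the endpoint cases $a\in\{0,4\}$ being handled separately ($T(0)$ has full Lebesgue measure almost surely).

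\emph{Part (a) and preliminary asymptotics.} The $|C_n|=s_n^{-4}$ hypercubes $S(x_{ni},s_n)$ tile $J$ and each has volume $s_n^4$; since $E^n(x)$ depends only on the increments of the Brownian motion $B(x,\cdot)$ of Lemma~\ref{lem:time inversion}(a), the probability $p_n:=\prob{E^n(x_{ni})}$ does not depend on $i$, so
\[
\E{\mu_n(J)}=\sum_{i=1}^{|C_n|}\frac{1}{p_n}\,\prob{Y_{ni}=1}\,s_n^4=|C_n|\,p_n^{-1}\,p_n\,s_n^4=1 .
\]
Brownian scaling gives $\prob{A_m(x)}\asymp\exp\!\big(-2a\pi^2(t_{m+1}-t_m)\big)$ with constants independent of $m$, and $\prob{B_{n+1}(x)}$ is a fixed positive constant; since $G(r)=-\frac{1}{2\pi^2}\log r+\O{1}$ as $r\to0$, this yields $p_n\asymp\exp(-2a\pi^2 t_{n+1})\asymp s_{n+1}^a=s_n^{a+\o{1}}$, and the same computation gives $\prob{\bigcap_{m\le k}A_m(x)}\asymp s_{k+1}^a$ for every $k\le n$.

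\emph{The correlation bound (main step).} Fix $i\neq j$, put $d_{ij}:=|x_{ni}-x_{nj}|\ge s_n$, and let $k=k(i,j)$ be the largest index with $s_k\ge 2d_{ij}$, so $s_k\asymp d_{ij}$. The claim is
\[
\frac{\prob{E^n(x_{ni})\cap E^n(x_{nj})}}{p_n^2}\le C\,d_{ij}^{-a+\o{1}} .
\]
Split the scales at $t_k$. For $m>k$ the annuli $D(x_{ni},s_m)\setminus D(x_{ni},s_{m+1})$ and $D(x_{nj},s_m)\setminus D(x_{nj},s_{m+1})$ are disjoint (each has radius below $d_{ij}/2$), so by Lemma~\ref{lem:time inversion}(b) the increments of $B(x_{ni},\cdot)$ and $B(x_{nj},\cdot)$ on $[t_k,\infty)$ are independent; hence the part of $E^n(x_{ni})\cap E^n(x_{nj})$ coming from scales below $s_k$ (which contains the constraints $B_{n+1}$) factorises and has probability $\asymp(p_n/s_{k+1}^a)^2$. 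For the coarse scales $m\le k$ one simply discards the $x_{nj}$--constraint and uses $\prob{\bigcap_{m\le k}A_m(x_{ni})}\asymp s_{k+1}^a$. Multiplying the three pieces and substituting $p_n\asymp s_{n+1}^a$, $s_{k+1}\asymp d_{ij}(k+1)^{-1}$, $k\asymp\log(1/d_{ij})/\log\log(1/d_{ij})$ gives the displayed bound, the residual $\o{1}$ absorbing a power of $\log(1/d_{ij})$; if a bound free of logarithmic loss is wanted one additionally invokes the local H\"older continuity of Proposition~\ref{prop:KC} (equivalently Corollary~3 of \cite{CJ}) to see that $B(x_{ni},t)$ and $B(x_{nj},t)$ differ by $\o{\sqrt t}$ for $t\le t_k$, so that the coarse events essentially coincide rather than being merely comparable. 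This step — decoupling the fine scales while charging the coarse scales only once, so as to convert the ``wrong'' factor $p_n^2$ in the denominator into the integrable kernel $d_{ij}^{-a}$ — is where the Markov property and the covariance formulas of Section~\ref{sec:support} do their work, and it is the main obstacle.

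\emph{Parts (b), (c) and (d).} Expanding the second moment,
\[
\E{\mu_n(J)^2}=\sum_{i,j}\frac{\prob{E^n(x_{ni})\cap E^n(x_{nj})}}{p_n^2}\,s_n^8,
\]
the diagonal contributes $|C_n|\,p_n^{-1}s_n^8=s_n^{4-a+\o{1}}\to0$, and by the correlation bound the off-diagonal part is at most $C s_n^8\sum_{i\ne j}d_{ij}^{-a+\o{1}}\asymp C s_n^8\,|C_n|\,s_n^{-4}\!\int_{s_n\lesssim|z|\lesssim1}|z|^{-a+\o{1}}\,\mathrm dz$, which stays bounded in $n$ because $a<4$; this gives (b). The same scheme with the additional factor $\iint_{S_i\times S_j}|x-y|^{-\alpha}\,\mathrm dx\,\mathrm dy\asymp s_n^{8}d_{ij}^{-\alpha}$ (and $\asymp s_n^{8-\alpha}$ on the diagonal, using $\alpha<4$) gives $\E{I_\alpha(\mu_n)}\lesssim\int_{s_n\lesssim|z|\lesssim1}|z|^{-(\alpha+a)+\o{1}}\,\mathrm dz$, which is bounded uniformly in $n$ precisely when $\alpha+a<4$, i.e.\ $\alpha<4-a$; the borderline $\alpha=4-a$ in the statement is then recovered by letting $\alpha\uparrow 4-a$ at the end of the energy argument. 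This gives (c). Finally, (a), (b) and Paley--Zygmund give $\prob{\mu_n(J)>\tfrac12}\ge\tfrac14/\E{\mu_n(J)^2}\ge c_0>0$ uniformly in $n$; choosing $M$ with $\prob{\mu_n(J)\ge M}\le1/M<c_0/3$ and, by (c) and Markov, $\beta$ with $\prob{I_\alpha(\mu_n)\ge\beta}<c_0/3$, a union bound gives $\prob{\tfrac12\le\mu_n(J)<M,\ I_\alpha(\mu_n)<\beta}\ge c_0/3>0$, which is (d) with $b:=1/(2M)$ and $\beta$ playing the role of the constant denoted $a$ there.
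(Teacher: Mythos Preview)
Your approach matches the paper's: it too proves (b) and (c) via a correlation inequality of the form $\prob{E^n(x)\cap E^n(y)}\le\mathscr C_l\,\prob{E^n(x)}\prob{E^n(y)}$ for $y\in S(x,s_l)\setminus S(x,s_{l+1})$ (packaged there as a separate lemma, Lemma~\ref{lemma:disjoint}, with proof deferred to \cite{HMP}), and then sums $s_l^4\mathscr C_l$ (resp.\ $s_l^4\mathscr C_l s_{l+1}^{-\alpha}$) over shells exactly as you integrate $|z|^{-a+\o{1}}$ (resp.\ $|z|^{-(a+\alpha)+\o{1}}$); part (d) is likewise Paley--Zygmund plus Markov. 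The only caveat in your inline derivation is that the independence claim at the split requires a small buffer of indices around $k$ (the paper uses $l-1,\,l,\,l+1$), since the boundary annuli can overlap when $s_{k+1}$ is comparable to $d_{ij}$---a routine adjustment, not a gap.
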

The proof of Lemma \ref{lemma:three_steps} requires a correlation inequality and a lower bound depends on the following lemma. Its proof is similar to the proof of Lemma 3.3 of \cite{HMP} and hence we skip it.

\begin{lemma}\label{lemma:disjoint}
 Let $A_m(x), B_m(x)$ be as above with
 $s_m=\frac1{m!}$. Let $$E^n(x)= \bigcap_{m\le n} A_m(x) \cap B_{n+1}(x).$$

 Then
for every $y\in S(x,s_l)\setminus S(x, s_{l+1})$, $l>2 $, we have
\begin{equation}
 \prob{E^n(x)\cap E^n(y)}\le \mathscr C_l \prob{E^n(x)}\prob{E^n(y)},
 \end{equation}
 where $\mathscr C_l$ is defined by
 $$
\mathscr C_l:=C\prod_{j\le l+1} \f{1}{c_j},
$$
and $c_j= \exp\left(\f12 \sqrt{4a\pi^2}\sqrt{t_{j+1}-t_j}-4a\pi^2(t_{j+1}-t_j)\right)$.
\end{lemma}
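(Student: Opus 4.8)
\emph{Proof sketch.} The plan is to exploit the scale decomposition furnished by Lemma~\ref{lem:time inversion}. Since $y\in S(x,s_l)\setminus S(x,s_{l+1})$ we have $|x-y|>s_{l+1}=(l+2)s_{l+2}>2s_{l+2}$, so for every $m\ge l+2$ the balls $D(x,s_m)$ and $D(y,s_m)$ are disjoint. Set
\[
C_z:=\bigcap_{m\le l+1}A_m(z),\qquad F_z:=\bigcap_{l+2\le m\le n}A_m(z)\cap B_{n+1}(z),
\]
so that $E^n(z)=C_z\cap F_z$. Every $A_m(z)$ and $B_{n+1}(z)$ is written through increments of the Brownian motion $B(z,\cdot)$, and by Lemma~\ref{lem:time inversion} the increments over $(t_m,t_{m+1}]$ are carried by the field in the annulus $D(z,s_m)\setminus D(z,s_{m+1})$; thus $C_z$ lives in $D(z,s_1)\setminus D(z,s_{l+2})$ and $F_z$ in $D(z,s_{l+2})$. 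Consequently $F_z$ is independent of $C_z$ (independent increments of a Brownian motion), and $F_x$ — living in the ball $D(x,s_{l+2})$, which is disjoint both from $D(x,s_1)\setminus D(x,s_{l+2})$ and from $D(y,s_{l+2})$ — is independent of the pair $(C_x,F_y)$. Moreover $p_m:=\prob{A_m(z)}$, $\prob{B_{n+1}(z)}$ and $\prob{F_z}$ do not depend on $z$, and $\prob{E^n(z)}=\prob{C_z}\prob{F_z}$ with $\prob{C_z}=\prod_{m\le l+1}p_m$.

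Using $\one_{C_x}\one_{C_y}\one_{F_x}\one_{F_y}\le\one_{C_x}\one_{F_x}\one_{F_y}$ and the independence of $F_x$ from $(C_x,F_y)$ one gets $\prob{E^n(x)\cap E^n(y)}\le\prob{F_x}\,\prob{C_x\cap F_y}$. The key point — and the main obstacle — is the near-independence estimate
\[
\prob{C_x\cap F_y}\ \le\ C\,\prob{C_x}\,\prob{F_y},\qquad C\ \text{independent of}\ l,n,x,y .
\]
This is not automatic, since $F_y$ lives in $D(y,s_{l+2})$, which is contained in the annulus $D(x,s_1)\setminus D(x,s_{l+2})$ that $C_x$ depends on. To prove it one uses the covariance identities of \S\ref{sec:support}: the non‑overlapping formula and its counterpart for nested spheres both show that $\E{\mathcal I(h_{\mu^x_{\epsilon_1}})\,\mathcal I(h_{\mu^y_{\epsilon_2}})}$ does \emph{not} depend on $\epsilon_2$ as long as $\overline{D(y,\epsilon_2)}$ is wholly inside $D(x,\epsilon_1)$ or wholly outside $\overline{D(x,\epsilon_1)}$; hence for every coarse scale $m$ whose annulus $D(x,s_m)\setminus D(x,s_{m+1})$ does not meet $D(y,s_{l+2})$, the event $A_m(x)$ is \emph{exactly} uncorrelated with — and therefore independent of — the increments of $B(y,\cdot)$ at radii $\le s_{l+2}$. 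Only $\O{1}$ coarse scales (those with $s_{m+1}<|x-y|\le s_m$, i.e.\ $m$ within a bounded distance of $l$) fail this, and for each of them the coupling with $F_y$ is confined to the $x$-radii in $[\,|x-y|-s_{l+2},\,|x-y|+s_{l+2}\,]$, a window of $t$-values of length $\O{s_{l+2}/|x-y|}=\O{1/l}$ inside an interval $(t_m,t_{m+1}]$ of length $\asymp\log l$; over such a short window $B(x,\cdot)$ varies by only $\O{l^{-1/2}}$, far below the admissible slack $\sqrt{t_{m+1}-t_m}$, so conditioning on $F_y$ changes the probability of these $\O{1}$ events by at most a bounded factor. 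Writing $C_x$ as the intersection of the exactly‑decoupled part with these $\O{1}$ events — the two factors being independent by disjoint annuli — yields the displayed bound. (Alternatively, one may deduce it from the Markov property of the field of \cite{CJ}: decompose the field in $D(y,s_{l+2})$ into an independent interior part, on which $F_y$ depends, plus a biharmonic extension, and control the residual correlation.)

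Granting these two inequalities, and using $\prob{F_y}=\prob{E^n(y)}/\prob{C_y}$ with $\prob{C_y}=\prod_{m\le l+1}p_m$, one obtains
\[
\prob{E^n(x)\cap E^n(y)}\ \le\ C\,\prob{F_x}\prob{C_x}\prob{F_y}=C\,\prob{E^n(x)}\prob{F_y}=\frac{C}{\prod_{m\le l+1}p_m}\,\prob{E^n(x)}\prob{E^n(y)} .
\]
It remains to pass from $\prod_{m\le l+1}p_m^{-1}$ to $\mathscr C_l=C\prod_{j\le l+1}c_j^{-1}$. With $\mu:=\sqrt{4a\pi^2}$ and $\tau_m:=t_{m+1}-t_m$, a Cameron--Martin change of measure removing the drift $\mu$ on $[t_m,t_{m+1}]$ gives, for a standard Brownian motion $W$,
\[
p_0\,e^{-\mu\sqrt{\tau_m}-\mu^2\tau_m/2}\ \le\ p_m\ \le\ p_0\,e^{\mu\sqrt{\tau_m}-\mu^2\tau_m/2},\qquad p_0:=\prob{\textstyle\sup_{0<u\le1}|W_u|\le1}\in(0,1),
\]
while $c_m=e^{\mu\sqrt{\tau_m}/2-\mu^2\tau_m}$. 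Hence $c_m/p_m\le p_0^{-1}e^{\frac32\mu\sqrt{\tau_m}-\frac12\mu^2\tau_m}$ is bounded by a universal constant $C_0$ for all $m$ and, since $\tau_m\to\infty$, is $<1$ for all $m$ past some fixed index $M$; therefore $\prod_{m\le l+1}(c_m/p_m)\le C_0^M$ uniformly in $l$, i.e.\ $\prod_{m\le l+1}p_m^{-1}\le C_0^M\prod_{m\le l+1}c_m^{-1}$. Combining everything yields $\prob{E^n(x)\cap E^n(y)}\le\mathscr C_l\,\prob{E^n(x)}\prob{E^n(y)}$ with $\mathscr C_l=C\prod_{j\le l+1}c_j^{-1}$, as asserted. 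The only genuinely delicate step is the near‑independence bound $\prob{C_x\cap F_y}\le C\,\prob{C_x}\prob{F_y}$; everything else is the Brownian structure of Lemma~\ref{lem:time inversion}, elementary inequalities, and a one‑dimensional Girsanov computation — which is precisely why this lemma parallels Lemma 3.3 of \cite{HMP}. \qed
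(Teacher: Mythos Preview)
Your overall architecture is right and matches the paper's (and \cite{HMP}'s): split the scales at $l+1$, use that $F_x$ is exactly independent of everything else, and at the end convert $\prod_{m\le l+1}p_m^{-1}$ into $\prod_{m\le l+1}c_m^{-1}$ by Girsanov. The divergence from the paper, and the genuine gap, is precisely the step you yourself flag: the bound
\[
\prob{C_x\cap F_y}\ \le\ C\,\prob{C_x}\,\prob{F_y}\qquad\text{with }C\text{ independent of }l.
\]
Your heuristic does not establish this. The events $A_m(x)$ for $m\in\{l-1,l,l+1\}$ have probability $p_m\to 0$ as $l\to\infty$; the fact that the coupling window has $t$-length $\O{1/l}$ and that the Brownian path moves only $\O{l^{-1/2}}$ over it controls an \emph{additive} perturbation of the path, not a \emph{multiplicative} comparison of the probabilities of rare events. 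Conditioning on $F_y$ can in principle shift the conditional probability of such an $A_m(x)$ by a factor that is not uniformly bounded in $l$, and nothing in your sketch rules this out. (If you try the honest route---drop $A_{l-1}(x),A_l(x),A_{l+1}(x)$ and use exact independence---you get $\prob{C_x\cap F_y}\le(p_{l-1}p_lp_{l+1})^{-1}\prob{C_x}\prob{F_y}$, which is $l$-dependent.)

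The paper avoids this issue altogether by a simpler, purely combinatorial device: instead of seeking near-independence, it also discards the finitely many \emph{$y$-scales} that interact with the fine $x$-ball. Concretely, one drops $B_{n+1}(x),B_{n+1}(y)$, then drops $\{A_i(x):i\le l+1\}$ \emph{and} $\{A_j(y):j\in\{l-1,l,l+1\}\}$. What remains, namely $\bigcap_{l+2\le i\le n}A_i(x)$ and $\bigcap_{j\le n,\,j\notin\{l-1,l,l+1\}}A_j(y)$, is \emph{exactly} independent (the $x$-part lives in $D(x,s_{l+2})$, which for $l>2$ is disjoint from every surviving $y$-annulus). One then multiplies and divides by
\[
\prob{\textstyle\bigcap_{i\le l+1}A_i(x)}\cdot\prob{\textstyle\bigcap_{j=l-1}^{l+1}A_j(y)}
\]
and uses independence of the $A$'s at a fixed centre to reconstruct $\prob{E^n(x)}\prob{E^n(y)}$ in the numerator; the denominator is bounded below by $\prod_{j\le l+1}c_j$ times three further factors $c_{l-1}c_lc_{l+1}$, all via the Girsanov lower bound you already wrote down. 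These extra three factors are harmless for the application in Lemma~\ref{lemma:three_steps}, since $\sum_l s_l^4\mathscr C_l$ and $\sum_l s_l^4\mathscr C_l s_{l+1}^{-\alpha}$ converge by a factorial margin. In short: your Girsanov endgame is fine, but the ``delicate step'' you isolate should simply be bypassed by dropping three more $y$-events, not argued through.
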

\blank{
 \begin{proof}[Proof of Lemma~\ref{lemma:disjoint}]
  Fix $l>2$ and $y\in S(x, s_l)\setminus S(x, s_{l+1})$. First note that the collections $\{A_i(x): 1\le i\le l+1\}$ and
 $\{A_i(x): l+2\le i\le n\}$ are independent as they depend on disjoint annuli. Similarly, as $S(x, s_{l+2})\cap S(x,s_j)\setminus S(x, s_{j+1})=\emptyset$
 the collection $\{A_j(y): j\neq l-1, l,l+1\}$ and $\{A_i(x): l+2\le i\le n\}$ are independent.
 Now note that by the assumption,
 \begin{align}
 & \prob{\bigcap_{1\le i\le l+1}A_i(x)}\prob{\bigcap_{l-1\le j\le l+1}A_j(y)}\nonumber \\&= \prod_{1\le i\le l+1}\prob{A_i(x)}\prod_{l-1\le j\le l+1}\prob{A_j(y)}\ge \prod_{i=1}^{l+1}\mathscr C_l.\label{eq:lower1}
 \end{align}
 So we have,
 \begin{align*}
  \prob{E^n(x)\cap E^n(y)} &=\prob{\bigcap_{i\le n}A_i(x)\cap B_{n+1}(x)\cap\bigcap_{j\le n}A_j(y)\cap B_{n+1}(y)}\\
 &\le \prob{ \bigcap_{i\le n}A_i(x)\cap\bigcap_{j\le n}A_j(y)}\\
 &\le \prob{ \bigcap_{l+2\le i\le n}A_i(x)\cap\bigcap_{j\le n, j\neq l-1, l, l+1}A_j(y)}\\
 &\le \prob{ \bigcap_{l+2\le i\le n}A_i(x)}\prob{\bigcap_{j\le n, j\neq l-1, l, l+1}A_j(y)}\\
 \end{align*}
 If we now multiply and divide the last probability by
 $$\prob{\bigcap_{1\le i\le l+1}A_i(x)}\prob{\bigcap_{l-1\le j\le l+1}A_j(y)}$$
 and use independence we get
 \[
  \prob{E^n(x)\cap E^n(y)}\le\frac{\prob{\bigcap_{i\le n}A_i(x)}\prob{\bigcap_{i\le n}A_i(y)}}{\prob{\bigcap_{1\le i\le l+1}A_i(x)}
 \prob{\bigcap_{l-1\le j\le l+1}A_j(y)}}.
 \]
 Now using the bound in \eqref{eq:lower1} and the fact that $B_{n+1}(x)$ is independent from $\{A_i(x):i\le n\}$ we get
 \[
 \prob{E^n(x)\cap E^n(y)}\le \mathscr C_l \prob{E^n(x)}\prob{E^n(y)}
 \]
 We can adjust appropriately the constant $\mathscr C_l$ when $l\le 2$ to complete the proof.
 \end{proof}
 }

Using the above Lemma the proof of Lemma~\ref{lemma:three_steps} is almost immediate.

\begin{proof}[Proof of Lemma~\ref{lemma:three_steps}]
Note the series $\sum_{l=1}^{\infty}s_l^4 \mathscr C_l$ converges (absolutely) by the ratio test. By means of the same criterion one shows also that $\sum_{l=1}^{\infty}s_l^4 \mathscr C_l s_{l+1}^{-\alpha}<+\infty$ under the assumption $\alpha\le 4$. Keeping these facts in mind we proceed to the proof.

\begin{itemize}
\item[(a)] As $S(x_{ni}, s_n)$ forms a cover of $J$ it is easy to show that $\E{\mu_n(J)}=1$.
In particular, \begin{align*}
  \E{\mu_n(J)}&=\sum_{i=1}^{|C_n|} \frac1{\prob{E^n(x_{ni})}}\prob{Y_{ni}=1} \lambda(J\cap S(x_{ni},s_n))\\
 &= \sum_{i=1}^{|C_n|}\lambda(J\cap S(x_{ni},s_n))=1.
 \end{align*}

\item[(b)] Using Lemma~\ref{lemma:disjoint} we have,
\begin{align*}
 \E{\mu_n(J)^2}&=\sum_{i,j=1}^{|C_n|}\frac{\prob{Y_{ni}=1, Y_{nj}=1}}{\prob{E^n(x_{ni})}\prob{E^n(x_{nj})}}\lambda(S(x_{ni},s_n))\lambda(S(x_{nj},s_n))\\
&\le s_n^8 \sum_{i=1}^{|C_n|}\sum_{l=1}^n\sum_{j=1,s_{l}\ge |x_{nj}-x_{ni}|>s_{l+1}}^{|C_n|} \frac{\prob{E^n(x_{ni})\cap E^n(x_{nj})}}{\prob{E^n(x_{ni})}\prob{E^n(x_{nj})}}\\
&\le s_n^8\sum_{i=1}^{|C_n|}\sum_{l=1}^n \frac{s_l^4}{s_n^4}\mathscr C_l \le \sum_{l=1}^{\infty}s_l^4 \mathscr C_l<\infty.
\end{align*}
Above we have used the fact that the number of hypercubes with center at $x_{ni}$ and radius $s_l$ is proportional to $s_l^4/s_n^4$.

\item[(c)] For the expected energy we follow the same procedure as above. Note that $|x_{ni}-x_{nj}|>s_{l+1}$ then if we take
$x\in S(x_{ni},s_n)$ and $y\in S(x_{nj},s_n)$ then $|x-y|>s_{l+1}$.
\begin{align*}
 \E{I_{\alpha}(\mu_n)}&=\sum_{i,j=1}^{|C_n|}  \frac{\prob{E^n(x_{ni})\cap E^n(x_{nj})}}{\prob{E^n(x_{ni})}\prob{E^n(x_{nj})}}\int_{S(x_{ni},s_n)}\int_{S(x_{nj},s_n)}\frac{\mathrm d x\mathrm d y}{|x-y|^{\alpha}}\\
&\le s_n^8\sum_{i=1}^{|C_n|}\sum_{l=1}^n \frac{s_l^4}{s_n^4}\mathscr C_l s_{l+1}^{-\alpha}\le \sum_{l\ge 1}\mathscr C_l s_l^4s_{l+1}^{-\alpha}<+\infty.
\end{align*}
\item[(d)]
By Paley-Zygmund inequality and the fact that $\sup_{n\ge 2} \E{\mu_n(J)^2}<\infty$, there exists $v>0$
\[
\prob{ \mu_n(J)\ge b} \ge (1-b)^2\f{1}{\E{\mu_n(J)}}\ge \frac{(1-b)^2}{\sup_{n\ge 2}\E{\mu_n(J)^2}}\ge v>0.
\]
Also using Markov's inequality we have that $$\prob{\mu_n(J)\ge b^{-1}}\le b \E{\mu_n(J)}=b.$$ Hence choosing $b>0$ and $v>0$ appropriately we have,
\[
\prob{b\le \mu_n(J)\le b^{-1}}= \prob{\mu_n(J)\ge b}- \prob{\mu_n(J)\ge b^{-1}}\ge 2v>0.
\]
 Also note that since $\E{I_\alpha(\mu_n)}$ is uniformly bounded in $n$, using Markov's inequality we have
$$\prob{I_\alpha(\mu_n)> a}\le v.$$
Hence (d) follows from the above observations and the fact that,
\begin{align*}
\prob{b\le \mu_n(J)\le b^{-1}, I_\alpha(\mu_n)\le a}&\ge \prob{b\le \mu_n(J)\le b^{-1}}-\prob{I_\alpha(\mu_n)> a}\\
&\ge 2v-v=v>0.
\end{align*}
\end{itemize}
\end{proof}
\begin{proof}[Proof of the lower bound] Now using Lemma~\ref{lemma:three_steps} we continue with the proof of lower bound.
If we define
$$
G:=\limsup_{n\to+\infty}\left\{b\le \mu_n(J)<b^{-1}, I_\alpha(\mu_n)<a \right\},
$$
then by Lemma \ref{lemma:three_steps} (d), $\prob{G}$ is bounded away from zero. $I_\alpha$ being a lower semicontinuous function, the set of measures $\mu$ for which
$b\le \mu(J)<b^{-1}$ and $I_\alpha(\mu)<a$ is compact in the topology of weak convergence. Therefore the sequence
$(\mu_n)_{n\in \N}$ admits surely along a subsequence $(\mu_{n_k})_{k\in \N}$ a weak limit $\mu$, which is a finite measure supported on $P(a)$ and whose
$\alpha$-energy is finite. Hence, we have
\begin{equation}\label{lower:pos}
\prob{C_{\mathcal H}^{4-a}(P(a))>0}>0.
\end{equation}
Now by the monotonicity of the Hausdorf\/f-$\alpha$-measure, if we can show that $$\prob{C_{\mathcal H}^{4-a}(T(a,J))>0}\in\left\{0,1\right\}$$
then by~\eqref{lower:pos},  the set
$\left\{C_{\mathcal H}^{4-a}(T(a,J))>0\right\}$ will have probability one and hence the proof will be complete.

Now from the construction of $\mu^x_\epsilon$, it holds from Equation~7.9 of \cite{CJ} that $\ca{x}{\epsilon}=
f_1(\epsilon)\mathcal I(h_{\sigma^x_\epsilon})+f_2(\epsilon)\mathcal I(h_{\mathrm d \sigma^x_\epsilon}) $, where
$$
f_1(\epsilon)=\f{\epsilon I_1(\epsilon)-2 I_2(\epsilon)}{I_1^2(\epsilon)-I_0(\epsilon)I_2(\epsilon) },\quad f_2(\epsilon)=\f{-\epsilon  I_2(\epsilon)}{I_1^2(\epsilon)-I_0(\epsilon)I_2(\epsilon) }.
$$ Since $\lim_{\eps\to 0}f_1(\epsilon)=2$ and $\lim_{\eps\to 0}f_2(\epsilon)=0$, $\mu^x_\epsilon\to 2\delta_x$ as $\eps\to 0$ in the sense of
distributions. In fact, since $\widehat{\mathrm d \sigma^x_\eps}(\xi)=-\f{2}{\eps}J_2(\eps|\xi|)\exp\left(i (\xi,x)_{\R^4}\right)\to 0$ for all $\xi $, $\mathrm d \sigma^x_\eps\to 0$ in the sense of distributions.
Thus
\begin{eqnarray*}
&&\limsup_{\eps \to 0} \f{\ca{x}{\eps}}{\sqrt{2\pi}G(\eps)}=\limsup_{\eps \to 0}\f{f_1(\epsilon)\mathcal I(h_{\sigma^x_\epsilon})}{\sqrt{2\pi}G(\eps)}.
\end{eqnarray*}
By \cite{Str08} (Section 2),  if $\{h_m\}_{m\in\N}$ is an orthonormal basis of $H$,
$$[\mathcal I(h_{\sigma^x_\epsilon})](\theta)=\la \theta,\,\sigma^x_\eps\ra\stackrel{\mathcal W-\mbox{a.s.\ }}{=}\la\sum_{m\geq 1}[\mathcal I(h_m)(\theta)]h_m,\,\sigma^x_\eps \ra.$$
The series will depend then only on its tail, as $\la h_m,\,\sigma^x_\eps\ra\to h_m(x)$ and $G(\eps)\to+\infty$ as $\eps \to 0$. Using the fact that $(\mathcal I(h_m))_{m\geq 1}$ are i.i.d.\ we can apply Kolmogorov's 0-1 law to conclude.\end{proof}

\section{Appendix}
 Here we will collect some of the bounds on the Bessel functions. These bounds are easy to derive but for completeness we provide a short proof for them.

\begin{lemma}\label{lemma:bound_G}
(a) For some constant $C>0$ and $x>0$
 $$\abs{I^2_1(x)-I_0(x)I_2(x) }\ge C x^2.$$
(b) Let $G(\cdot)$ be as in~\eqref{eq:green}, then $G(x)\leq -C \log x$ for all $x\in [0,1]$, with $C>0$ uniform in $x$.
\end{lemma}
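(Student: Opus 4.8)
The plan is to establish (a) first and feed it into (b), since the quantity $W(x):=I_1^2(x)-I_0(x)I_2(x)$ on the left of (a) is exactly the denominator of $G$ in~\eqref{eq:green}. For (a) I would produce an explicit quantitative bound rather than invoke a black-box Tur\'an inequality. Differentiating $W$ and using the recurrences $I_0'=I_1$, $I_1'=\frac12(I_0+I_2)$, $I_2'=\frac12(I_1+I_3)$ together with $I_1-I_3=\frac4x\,I_2$, a one-line computation gives
\[
W'(x)=\frac2x\,I_0(x)I_2(x),\qquad W(0)=0 .
\]
The power series $I_0(t)=\sum_{k\ge0}(t^2/4)^k/(k!)^2$ and $I_2(t)=\sum_{k\ge0}(t/2)^{2k+2}/(k!\,(k+2)!)$ have nonnegative terms for $t\ge0$, so $I_0(t)\ge1$ and $I_2(t)\ge t^2/8$; hence $\frac2t\,I_0(t)I_2(t)\ge t/4$ and
\[
W(x)=\int_0^x W'(t)\,\mathrm{d}t\;\ge\;\int_0^x\frac t4\,\mathrm{d}t\;=\;\frac{x^2}{8}\;>\;0 .
\]
Since $W>0$, $\abs{W(x)}=W(x)\ge x^2/8$, which is (a) with $C=1/8$, valid for every $x>0$.

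For (b), write $G(r)=-\frac1{4\pi^2}\,N(r)/W(r)$ with $N(r):=2I_1(r)K_1(r)+2I_2(r)K_0(r)-1$. By part (a) the denominator is at least $r^2/8$ on $(0,1]$, so it suffices to show $N(r)=\O{r^2\abs{\log r}}$ as $r\downarrow0$. Inserting the small-argument expansions $I_1(r)=\frac r2+\O{r^3}$, $I_2(r)=\frac{r^2}{8}+\O{r^4}$, $K_0(r)=-\log r+\O{1}$ and $K_1(r)=\frac1r+\O{r\abs{\log r}}$, the leading term of $2I_1(r)K_1(r)$ equals $1$ and cancels the $-1$, while the remaining contributions are all $\O{r^2\abs{\log r}}$; dividing by $W(r)$, which is of order $r^2$, gives $G(r)=\O{\abs{\log r}}$ near $0$, and carrying the expansion one order further in fact reproduces $G(r)\sim-\frac1{2\pi^2}\log r$ as already recorded after~\eqref{eq:green}. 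Thus $G(r)\le -C_1\log r$ on $(0,\delta]$ for suitable $\delta\in(0,1)$ and $C_1>0$; on the remaining range $G$ is continuous, hence bounded, while $-\log r$ stays positive and bounded, so the bound persists there after enlarging the constant, and $C:=\min(C_1,C_2)$ works. Alternatively one may simply quote the stated asymptotic of $G$, which yields the $(0,\delta]$ part at once and bypasses the numerator estimate.

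The identity and the term-by-term estimates used in (a) are routine, and the $(0,\delta]$ reduction in (b) is immediate once the asymptotic of $G$ is in hand. The one point that needs genuine care is the numerator expansion in (b): one must track the logarithmic singularities of $K_0$ and $K_1$ precisely enough to see the $\O{1}$ contributions cancel the constant $-1$, which is exactly the computation underlying the already-asserted behaviour of $G$ near the origin. I expect that to be the only mildly delicate step.
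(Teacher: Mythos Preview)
Your proof of (a) is correct and takes a cleaner route than the paper's. The paper appeals to the identity $W(x)=\frac{I_1^2(x)}{x}\bigl(x\,I_1'(x)/I_1(x)\bigr)'$, expands this logarithmic derivative as a series over the zeros $j_{1,n}$ of $J_1$, and then invokes the infinite-product representation of $I_1$ to extract the lower bound. Your argument bypasses all of that machinery: the single identity $W'(x)=\frac{2}{x}I_0(x)I_2(x)$, which you verify directly from the recurrences, together with the obvious term-wise bounds $I_0\ge 1$ and $I_2(t)\ge t^2/8$, gives $W(x)\ge x^2/8$ by one integration. This is strictly more elementary and even yields an explicit constant.

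For (b) your approach coincides with the paper's: bound the denominator via (a) and expand the numerator $N(r)=2I_1K_1+2I_2K_0-1$ to see that the constant term cancels and what survives is $\O{r^2\abs{\log r}}$, whence $G(r)=\O{\abs{\log r}}$ near the origin. One small slip in your write-up: the gluing on $[\delta,1]$ cannot work at the endpoint $r=1$, since $-\log 1=0$ while $G(1)>0$, so no constant $C$ gives $G(1)\le -C\log 1$. This is not really your fault: the paper's own computation in fact terminates with $G(x)\le -C\log x+C'$, which is the estimate actually used downstream, so the lemma's statement is slightly imprecise and your argument proves exactly what the paper proves.
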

\begin{proof}
(a) Following \cite{JosBis} we have,
\begin{align*}
 I^2_1(x)-I_0(x)I_2(x) &=\frac{I_1^2(x)}{x}\left(x\frac{I_1'(x)}{I_1(x)}\right)'=\frac{I_1^2(x)}{x}\sum_{n\ge 1}\frac{4 x j_{1,n}}{(x^2+j_{1,n}^2)^2}.
\end{align*}
where we used the equality $\left(x\frac{I_1'(x)}{I_1(x)}\right)'=\sum_{n\ge 1}\frac{4 x j_{1,n}}{(x^2+j_{1,n}^2)^2}$, $j_{i,n}$ being
the $n$-th zero of $J_1(x)/x$ (\cite{Wat44}). Now using the identity $I_1(x)=(x/C)\prod_{n \ge 1}\left(1+\frac{x^2}{j_{1,n}^2}\right)$ (\citep[Page 498]{Wat44}) we derive
\begin{align*}
I^2_1(x)-I_0(x)I_2(x)&=\frac{I_1^2(x)}{x}\left(x\frac{I_1'(x)}{I_1(x)}\right)'\\
&=\frac{I_1^2(x)}{x}\frac{4 x j_{1,1}}{(x^2+j_{1,1}^2)^2}+\frac{I_1^2(x)}{x}\sum_{n\ge 2}\frac{4 x j_{1,n}}{(x^2+j_{1,n}^2)^2}\\
&>\frac{4 I_1^2(x) j_{1,1}}{(x^2+j_{1,1}^2)^2}>C'x^2.
\end{align*}
(b) By part (a) and the series expansion of Bessel functions (\cite{AbrSte}) one can find a bound for $G(\cdot)$ as follows ($\gamma$ is the Euler-Mascheroni constant):
\begin{small}
\begin{align*}
 G(x)&\leq \frac{C}{x^2}(2I_1(x)K_1(x)+2I_2(x)K_0(x)-1)\\
&=\frac{C}{x^2}\left(2\left(\frac{x}{2}+\frac{x^3}{16}+\O{x^4}\right)\left(\frac{1}{x}+\frac{x}{4}\left(-1+2\gamma-2\log 2+2\log x\right)+\O{x^3\log x}\right)\right.\\
&+\left.2\left(\frac{x^2}{8}+\O{x^4}\right)\left((-\gamma+\log 2-\log x)+\right.\right.+\left.\left.\O{x^2\log x}\right)-1\right)\\
&=\frac{C}{x^2}\left(1+\frac{x^2}{8}+\O{x^3}\right.+\left.\frac{-1+2\gamma-2\log 2}{4}x^2+\frac{-1+2\gamma-2\log 2}{32}x^4+\right.\\
&+\O{x^2\log x}+\left. \frac{x^2}{4}C+\O{x^4}-\frac{x^2\log x}{4}-1\right)=-C\log x+C'.
\end{align*}
\end{small}
Here $C,\,C'$ denote positive constants that may vary from line to line.
\end{proof}

 \section{Acknowledgements}
We thank Erwin Bolthausen, Linan Chen and Jason Miller for some helpful discussions.
\bibliography{literatur}

\begin{thebibliography}{}

\bibitem[Abramowitz and Stegun, 1965]{AbrSte}
Abramowitz, M. and Stegun, I.~A. (1965).
\newblock {\em Handbook of Mathematical Functions: with Formulas, Graphs, and
  Mathematical Tables (Dover Books on Mathematics)}.
\newblock Dover books on mathematics. Dover Publications, 1 edition.

\bibitem[{Chen} and {Jakobson}, 2012]{CJ}
{Chen}, L. and {Jakobson}, D. (2012).
\newblock {Gaussian Free Fields and KPZ Relation in R\^{}4}.
\newblock {\em To appear in Annales Henri Poincar\'e. Preprint available in
  arXiv:1210.8051.}

\bibitem[{Cipriani}, 2013]{Cip13}
{Cipriani}, A. (2013).
\newblock {High points for the membrane model in the critical dimension}.
\newblock {\em ArXiv e-prints}.

\bibitem[Daviaud, 2006]{Daviaud}
Daviaud, O. (2006).
\newblock {Extremes of the discrete two-dimensional Gaussian Free Field}.
\newblock {\em The Annals of Probability}, 34(3):962--986.

\bibitem[Dembo et~al., 2000]{DPRZSpatial}
Dembo, A., Peres, Y., Rosen, J., and Zeitouni, O. (2000).
\newblock {Thick points for spatial Brownian motion: multifractal analysis of
  occupation measure.}
\newblock {\em Ann. Probab.}, 28(1):1--35.

\bibitem[Dembo et~al., 2001]{DPRZPlanar}
Dembo, A., Peres, Y., Rosen, J., and Zeitouni, O. (2001).
\newblock {Thick points for planar Brownian motion and the Erd\H{o}s-Taylor
  conjecture on random walk.}
\newblock {\em Acta Math.}, 186(2):239--270.

\bibitem[Duplantier and Sheffield, 2011]{DS10}
Duplantier, B. and Sheffield, S. (2011).
\newblock {Liouville quantum gravity and KPZ}.
\newblock {\em Inventiones mathematicae}, 185(2):333--393.

\bibitem[Hu et~al., 2010]{HMP}
Hu, X., Miller, J., and Peres, Y. (2010).
\newblock {Thick points of the Gaussian free field.}
\newblock {\em Ann. Probab.}, 38(2):896--926.

\bibitem[Joshi and Bissu, 1991]{JosBis}
Joshi, C. and Bissu, S. (1991).
\newblock {Some inequalities of Bessel and modified Bessel functions.}
\newblock {\em J. Aust. Math. Soc., Ser. A}, 50(2):333--342.

\bibitem[Kahane, 1985]{Kah85}
Kahane, J.-P. (1985).
\newblock Sur le chaos multiplicatif.
\newblock {\em Ann. Sci. Math. Qu\'ebec}, 9(2):105--150.

\bibitem[Knizhnik et~al., 1988]{KPZ}
Knizhnik, V.~G., Polyakov, A.~M., and Zamolodchikov, A.~B. (1988).
\newblock Fractal structure of {$2$}{D}-quantum gravity.
\newblock {\em Modern Phys. Lett. A}, 3(8):819--826.

\bibitem[Kurt, 2007]{Kurt_d5}
Kurt, N. (2007).
\newblock {Entropic repulsion for a class of Gaussian interface models in high
  dimensions.}
\newblock {\em Stochastic Processes Appl.}, 117(1):23--34.

\bibitem[Kurt, 2008]{Kurt_thesis}
Kurt, N. (2008).
\newblock {\em {Entropic repulsion for a Gaussian membrane model in the
  critical and supercritical dimension}}.
\newblock PhD thesis, University of Zurich.

\bibitem[Kurt, 2009]{Kurt_d4}
Kurt, N. (2009).
\newblock {Maximum and entropic repulsion for a Gaussian membrane model in the
  critical dimension}.
\newblock {\em The Annals of Probability}, 37(2):687--725.

\bibitem[Mandelbrot, 1972]{Man72}
Mandelbrot, B. (1972).
\newblock Possible refinement of the lognormal hypothesis concerning the
  distribution of energy dissipation in intermittent turbulence.
\newblock In Rosenblatt, M. and Atta, C., editors, {\em Statistical Models and
  Turbulence}, volume~12 of {\em Lecture Notes in Physics}, pages 333--351.
  Springer Berlin Heidelberg.

\bibitem[Mandelbrot et~al., 2004]{BarMan04}
Mandelbrot, B., Lapidus, M., and Van~Frankenhuysen, M. (2004).
\newblock {\em Fractal Geometry and Applications: Multifractals, probability
  and statistical mechanics, applications}.
\newblock Fractal Geometry and Applications: A Jubilee of Beno{\^\i}t
  Mandelbrot : Analysis, Number Theory, and Dynamical Systems. American
  Mathematical Society.

\bibitem[M{\"o}rters et~al., 2010]{MoerPer}
M{\"o}rters, P., Peres, Y., Schramm, O., and Werner, W. (2010).
\newblock {\em Brownian Motion}.
\newblock Cambridge Series in Statistical and Probabilistic Mathematics.
  Cambridge University Press.

\bibitem[Rhodes and Vargas, 2013]{RhoVarRev}
Rhodes, R. and Vargas, V. (2013).
\newblock Gaussian multiplicative chaos and applications: a review.

\bibitem[Robert and Vargas, 2008]{RobVar08}
Robert, R. and Vargas, V. (2008).
\newblock Hydrodynamic turbulence and intermittent random fields.
\newblock {\em Communications in Mathematical Physics}, 284(3):649--673.

\bibitem[{Robert} and {Vargas}, 2010]{RaoVar10}
{Robert}, R. and {Vargas}, V. (2010).
\newblock {Gaussian multiplicative chaos revisited.}
\newblock {\em {Ann. Probab.}}, 38(2):605--631.

\bibitem[Stroock, 2008]{Str08}
Stroock, D. (2008).
\newblock Abstract wiener space, revisited.
\newblock {\em Comm. Stoch. Anal.}, 2(1):145--151.

\bibitem[Stroock, 2010]{Str10}
Stroock, D. (2010).
\newblock {\em Probability Theory: An Analytic View}.
\newblock Cambridge University Press.

\bibitem[Watson, 1944]{Wat44}
Watson, G.~N. (1944).
\newblock {\em A treatise on the theory of Bessel functions}.
\newblock Cambridge University Press.

\end{thebibliography}
\bibliographystyle{apalike}

\end{document}